\theoremstyle{plain}
\newtheorem{corollary}{Corollary}
\newtheorem{definition}{Definition}
\newtheorem{example}{Example}
\newtheorem{lemma}{Lemma}
\newtheorem{proposition}{Proposition}
\newtheorem{remark}{Remark}
\newtheorem{theorem}{Theorem}
\numberwithin{equation}{section}
\renewcommand{\leq}{\leqslant}
\renewcommand{\ge}{\geqslant}
\renewcommand{\geq}{\geqslant}
\begin{document}

\title[ On functions with Fourier transforms in Generalized  Grand Lebesgue space ]{On functions with Fourier transforms in Generalized  Grand Lebesgue space}
\author{A.Turan G\"{u}rkanl\i}
\address{A.Turan G\"{u}rkanl{\i} \\
Istanbul Arel University Faculty of Science and Letters\\
Department of Mathematics and Computer Sciences\\
\.Istanbul\\
Turkey}
\email{turangurkanli@arel.edu.tr}
\author{ B\"{u}lent Ayanlar}
\address{B\"{u}lent Ayanlar \\
Istanbul Arel University Faculty of Science and Letters\\
Department of Mathematics and Computer Sciences\\
\.Istanbul\\
Turkey}
\email{bulentayanlar@arel.edu.tr}
\author{ Esra Uluocak}
\address{Esra Uluocak \\
Istanbul Arel University Faculty of Science and Letters\\
Department of Mathematics and Computer Sciences\\
\.Istanbul\\
Turkey}
\email{esraaltinbilezik@arel.edu.tr}
\subjclass[2000]{Primary 46 E 30; Secondary 46 E 35.}
\keywords{Grand Lebesgue space, Grand Lebesgue sequence space, multiplier.}

\begin{abstract}

 Let $1<p,q<\infty ,\ \theta_1 \geq 0,\ \theta_2 \geq 0$
 and let $a(x), b(x)$ be a weight functions. %We denote the generalized grand Lebesgue space and grand Lebesgue
%sequence space by $L_a^{p),\theta }\left( \mathbb R^n\right) $ and $\ell ^{q),\theta
%}(\mathbb Z),$ respectively, (see [30,31,22]).
 In the present paper we intend to study
 the function  space $A_{q),\theta _{2}}^{p),\theta _{1}}\left( \mathbb R^n\right)$ consisting of all functions $f\in L_a^{p),\theta_1 }\left( \mathbb R^n\right) $ whose generalized Fourier transforms
$\widehat{f}$ belong to grand $L_b^{q),\theta_2 }\left( \mathbb R^n\right), $ where $ L_a^{p),\theta_1 }\left( \mathbb R^n\right)$ and $L_b^{q),\theta_2 }\left( \mathbb R^n\right) $ are generalized grand Lebesgue spaces. In the second section some definitions and notations  used in this work are given. In the third and fourth sections we discuss some basic properties and inclusion properties of $A_{q),\theta _{2}}^{p),\theta _{1}}\left( \mathbb R^n\right)$. In the fifth section we characterize the multipliers from  ${L^{1 }(\mathbb R^{n}, a^{\frac{\varepsilon}{p}})}$  to  $( L_a^{p),\theta}\left(\mathbb R^{n}\right))^{\ast}$ and from ${L^{1 }(\mathbb R^{n}, a^{\frac{\varepsilon}{p}})}$   into  $(A_{q),\theta_2}^{p),\theta_1}\left(\mathbb R^{n}\right))^{\ast}$  for ${0<\varepsilon \leq p-1}.$ The importance of this section is that, it gives us some insight  into the structure of the dual space $( L_a^{p),\theta}\left(\mathbb R^{n}\right))^{\ast}$ of the generalized grand Lebesgue space, the properties of which are  not yet known. Later we discuss duality and reflexivitiy properties of the space $A_{q),\theta _{2}}^{p),\theta _{1}}\left( \mathbb R^n\right)$.
\end{abstract}

\maketitle
\section{\protect\bigskip Introduction}
For $1\leq p\leq\infty,$ we denote by $ A^{p}\left(\mathbb R^{n}\right)$
 the space of functions $f\in L^1(\mathbb R^{n})$ for which $\hat f$  Fourier transform of $ {f}$ exists and  belongs to   $L^{p)}\left( \mathbb R^{n}\right).$ %It is a Banach space with the norm
%\begin{equation}
%\left\Vert f\right\Vert _{A^{p}\left(\mathbb R^{n}\right) } =\left\Vert
%f\right\Vert _{L^1}+\left\Vert \hat{f}\right\Vert _{L^{p}}. 
%\end{equation}
Research on $ A^{p}\left(\mathbb R^{n}\right) $ was initiated by Warner \cite{wr}, a number of researchers such as e.g., Larsen, Liu and Wang \cite{llw}, Martin and Yap \cite{my} worked on this spaces. Some generalizations to the weighted case was given by Gurkanli \cite{g1}, Feichtinger and Gurkanli \cite{fg}, Fischer, Gurkanli and Liu  \cite {rgl}. Also the multipliers and compact embeddings of the weighted  $ A^{p}\left(\mathbb R^{n}\right) $ and $ A^{p}\left(\mathbb R^{n}\right) $  spaces were worked in Gurkanli \cite{g2}, Tewari,Gupta\cite{tegu} and Gurkanli \cite {g3} respectively. In all these studies, Lebesgue space, weighted Lebesgue space and generalized Fourier transform have been used. In the present study, it was redefined using the generalized grand Lebesgue space (see Samko, Umarkhadzhiev \cite{su1},\cite{su3} and  Umarkhadzhiev  \cite{u}). It was denoted by $A_{b,q)}^{a,p)}\left(\mathbb R^{n}\right)$. 

%The main tool in this definition is the \textit{generalized grand Lebesgue space} ${L_a^{p),\theta}(\Omega)}$ where $\Omega\subseteq\left( R^{n}\right)$,see, \cite{su1},\cite{su3},\cite{u}. 

The initial definition of \textit{grand Lebesgue spaces assumes $\mid \Omega\mid<\infty$}. This space was introdeced by Iwaniec-Sbordone in \cite{is}, and later an extension was given by Greco-Iwaniec-Sbordone in \cite{gis}. Grand Lebesgue spaces on bounded sets  $ \Omega\subset \mathbb R^{n}$ are widely studied, see e.g.\cite{cfg}, \cite{fk},\cite{fi},\cite{g4} and \cite{g5}. In \cite{u} and \cite{su3} Samko and Umarkhadzhiev consider a more common approach to definition of grand Lebesgue spaces by an arbitrary open set $\Omega\subseteq \mathbb R^{n}.$ They define the generalized grand Lebesgue space  ${L_a^{p),\theta}(\Omega)}$ on a set $\Omega$ of possibly infinite measure.

\section{\protect\bigskip Preliminaries}
In this paper we will work on $\mathbb R^{n}$ with Lebesgue measure $dx.$ We denote by $\left \vert A\right\vert$ the Lebesgue measure of a measurable set $A\subseteq\mathbb R^{n} .$ We denote by $C_{c}(R^{n})$ the space of complex-valued, continuous functions with compact support and we denote by $C_{0}^{\infty}(\mathbb R^{n})$ the space of infinitely differentiable complex-valued continuous functions with compact support. The translation and modulation operators are given by%
\begin{align*}
T_{x}f\left( t\right) =f\left( t-x\right) ,\text{ }M_{\xi }f\left( t\right)
=e^{ i<\xi, t>}f\left( t\right) ,\text{ }t,\text{ }x,\text{ }\xi \in\mathbb R^{n}.
\end{align*}
 A positive, measurable and   locally integrable function $\omega$ vanishing on a set of zero measure is called a \textit{ weight function}. We say that $\omega_1\prec\omega_2$  if there exists a constant $C>0$ such that $\omega_1(x)\leq C\omega_2(x)$. Two weight functions are called equivalent  and written $\omega_1\approx\omega_2,$ if $\omega_1\prec\omega_2$ and $\omega_2\prec\omega_1.$ A weight function $\omega$ is called \textit{ submultiplicative}, if
\begin{equation}
 \omega(x+y)\leq\omega(x)\omega(y),\ \ \forall x,y\in \mathbb R^{n}.
\end{equation}
  A weight function $\omega$ is called \textit{Beurling's weight function} on $\mathbb R^n$ if submultiplicative and $\omega(x)\ge1, $ \cite{re}.  For $1\leq p<\infty,$ we define the weighted space $L_{\omega}^p(\mathbb R^{n})$ with the norm
\begin{align}
\left\Vert {f}\right\Vert_{{L_{\omega}^p}}=\left( \int_{\mathbb R^{n} }\left\vert f\right\vert ^{p }\omega(x)dx\right) ^{\frac{1}{p }},  
\end{align}
 \cite{cr},\cite{g1},\cite{re}. The main tool in the present paper is the Fourier transform, (see\cite {k},\cite {ru}), and the generalized Fourier transform  discussed below, (see \cite{h},\cite{t}).
For a function $f\in L^{1}(\mathbb R^{n}) ,$ the function $\widehat{f}$  or ($\mathcal{F}(f))$
defined on $\widehat{\mathbb R^{n}}=\mathbb R^{n}$ by $\ $ 
\begin{equation}
\hat{f}\left( \gamma \right) =\int_{\mathbb R^{n}}f\left( x\right) \ e^{-i<\gamma, x>} dx,\qquad (\gamma \in \widehat{\mathbb R^{n}}=\mathbb R^{n}),
\end{equation}
is called the \textit{\ Fourier transform} of $f,$ where $\widehat{\mathbb R^{n}}$ is the Pontryagin dual of  $\mathbb R^{n}.$ It is known that $\hat{f}$ is uniformly continuous and   $\hat{f}\in C_0( \widehat{\mathbb R^{n}})=C_0(\mathbb R^{n}),$ where $C_0(\mathbb R^{n})$  is the space of continuous functions on $\mathbb  R^{n}$ which vanish at infinity. 

 For $n\in\mathbb N, $
\begin{equation}
\mathcal{S}(\mathbb R^{n}) =\{f\in \mathcal{C^{\infty}}(  \mathbb R^{n})\mid \sup_{x\in\mathbb R^{n}}(1+\mid x\mid^2)^{\frac{k}{2}}\sum_{\mid \alpha\mid\leq\ell}\mid D^{\alpha}f(x)\mid<\infty, for\ all\ k,\ell \in\mathbb N \}
\end{equation} 
 is called Schwartz space  \cite{h, t} . The elements of this space is rapidly decreasing infinitely differentiable functions. A sequence $(f_n)_{n\in N}\subset\mathcal{S}(\mathbb  R^{n}) $ is said to converge in $\mathcal{S}( \mathbb R^{n}) $  to a function $f\in\mathcal{S}( \mathbb R^{n}) $ if 
\begin{equation*}
\|f_n-f\|_{k,\ell}\rightarrow0, for\ all \ k,\ell\in\ \mathbb N
\end{equation*}
 where
\begin{equation}
\|f\|_{k,\ell}= \sup_{x\in\mathbb R^{n}}(1+\mid x\mid^2)^{\frac{k}{2}}\sum_{\mid \alpha\mid\leq\ell}\mid D^{\alpha}f(x)\mid.
\end{equation}
 It is well-known that the Fourier transforms induce an isomorphism between $\mathcal{S}\left(  R^{n}\right) $ and $\mathcal{S}\left( \widehat{\mathbb R^{n}}\right) ,$ hence their
transposes induce isomorphism of their dual space . Thus the  \textit{ generalized Fourier transform} is defined by $%
\left\langle \widehat{\sigma },f\right\rangle =\left\langle \sigma ,\widehat{%
f}\right\rangle $ \ for \ $f\in\mathcal{S}\left( \widehat{\mathbb R^{n}}\right) ,\sigma \in
\mathcal{S}^{^{\prime }}\left(\mathbb R^{n}\right) ,$ \cite{ t,h}  . It is known that $%
L^{p}\left(\mathbb  R^{n}\right) \hookrightarrow \mathcal{S}^{^{\prime }}\left(\mathbb  R^{n}\right) $ $%
 .$ We shall write $\hat{f}$ \ or $\mathcal {F}(f)$ for Fourier transform of the
function $f\in L^{p}\left(\mathbb R^{n}\right) .$

Let $\Omega$ be an open subset of $ R^{n}$and $|\Omega|<\infty.$  The \textit{grand
Lebesgue space} $L^{p)}\left( \Omega \right) $ is defined by the norm
\begin{equation}
\left\Vert {f}\right\Vert _{p)}=\sup_{0<\varepsilon \leq p-1}\left(
\varepsilon \int_{\Omega }\left\vert f\right\vert ^{p-\varepsilon }d\mu
\right) ^{\frac{1}{p-\varepsilon }},
\end{equation}
where $1<p<\infty, $ \cite{ is}. A kind of \textit{ generalization of the grand Lebesgue spaces}
are the spaces $L^{p),\theta }\left( \Omega \right) ,$ $\theta \geq 0,$
defined by the norm
\begin{align}
\notag\left\Vert f\right\Vert _{p),\theta ,\Omega }=\left\Vert f\right\Vert
_{p),\theta }&=\sup_{0<\varepsilon \leq p-1}\varepsilon ^{\frac{\theta }{
p-\varepsilon }}\left( \int_{\Omega }\left\vert f\right\vert ^{p-\varepsilon
}d\mu \right) ^{\frac{1}{p-\varepsilon }}\\&=\sup_{0<\varepsilon \leq
p-1}\varepsilon ^{\frac{\theta }{p-\varepsilon }}\left\Vert {f}\right\Vert
_{p-\varepsilon },
\end{align}
when $\theta =0$ the space $L^{p),0}\left( \Omega \right) $ reduces to
Lebesgue space $L^{p}\left( \Omega \right) $ and when $\theta =1$ the space $
L^{p),1}\left( \Omega \right) $ reduces to grand Lebesgue space $
L^{p)}\left( \Omega \right),$ \cite{ gis}. For $
0<\varepsilon \leq p-1,$
\begin{equation*}
L^{p}\left( \Omega \right) \subset L^{p),\theta }\left( \Omega \right)
\subset L^{p-\varepsilon }\left( \Omega \right)\subset L^1(\Omega) .
\end{equation*}
hold. It is known that the subspace ${C_{0}^{\infty }}\left( \Omega \right) $
is not dense in $L^{p),\theta }\left( \Omega \right). $ Its closure consists of functions $f\in L^{p)}\left( \Omega \right)
$ such that
\begin{equation*}
\lim_{\varepsilon \rightarrow 0}\varepsilon ^{\frac{\theta }{p-\varepsilon }
}\left\Vert f\right\Vert _{p-\varepsilon }=0
\end{equation*}
(see \cite{ gis}). The grand Lebesgue space $L^{p),\theta }\left( \Omega \right)
,$ is not reflexive. For some properties and applications of $L^{p)}\left(
\Omega \right)$ and $L^{p),\theta }\left( \Omega \right) ,$ we refer to
papers \cite{is,a,cfg,cr,g4,g5}. 

In all above mentioned studies only sets $\Omega$ of finite measure were allowed, based on the embedding
\begin{equation*}
L^{p}\left( \Omega \right) \hookrightarrow% L^{p) }\left( \Omega \right) \subset
L^{p-\varepsilon }\left( \Omega \right) .
\end{equation*}
Let $1< p< \infty$ and $\Omega\subseteq R^n $ be an open subset. We define the \textit{ generalized grand Lebesgue space} $L_a^{p),\theta}(\Omega)$ on a set $\Omega$ of possibly infinite measure as follows (see,\cite{su3},\cite{u}): 
\begin{align}
L_a^{p),\theta}(\Omega)&=\{ f:\left\Vert {f}\right\Vert _{L^{p),\theta}_a(\Omega)} = \sup_{0<\varepsilon \leq p-1}\varepsilon^{\theta} \left( \int_{\Omega }\left\vert f(x)\right\vert ^{p-\varepsilon} a(x)^{\frac{\varepsilon}p} dx
\right) ^{\frac{1}{p-\varepsilon}%\tag{6}
}\\&=\sup_{0<\varepsilon \leq p-1}\varepsilon^{\theta} \left\Vert {f}\right\Vert_{L^{p-\varepsilon }(\Omega, a^{\frac{\varepsilon}p})}<\infty \}.\ \notag 
%_%{L^{p-\varepsilon }%(\Omega,% a^{\frac\varepsilon}p)}<\infty \}. \ \tag{6} 
\end{align}
The norm of this space is equivalent to the norm
\begin{align}
\left\Vert {f}\right\Vert _{L^{p),\theta}_a(\Omega)}& = \sup_{0<\varepsilon \leq p-1}\left(
\varepsilon^{\theta} \int_{\Omega }\left\vert f(x)\right\vert ^{p-\varepsilon} a(x)^{\varepsilon} dx
\right) ^{\frac{1}{p-\varepsilon}%\tag{6}
}\\&=\sup_{0<\varepsilon \leq p-1}\varepsilon ^{\frac{\theta }{p-\varepsilon }}\left\Vert {f}\right\Vert
_{L^{p-\varepsilon }(\Omega, a^{\varepsilon})}\ \notag
\end{align}
(see \cite{su3}). We call $a(x)$ the grandizer of the space $L_a^{p),\theta}(\Omega)$. It is known that the embedding
 \begin{align}
L^p(\Omega)\subset L_a^{p),\theta}(\Omega)
 \end{align}
holds if and only if $a\in L^1(\Omega)$ (see \cite{su3}).   It is also known that $L_a^{p),\theta}(\Omega)$ is a Banach space  and  for $ 0<\varepsilon \leq p-1, \  p>1$ 
\begin{equation}
L^{p}\left( \Omega \right) \subset L_a^{p),\theta}(\Omega)
\subset{L^{p-\varepsilon }(\Omega, a^{\frac{\varepsilon}p})} .
\end{equation}
hold.

Let $\left( A,\left\Vert \cdot\right\Vert _{A}\right) $ be a Banach algebra. A
Banach space $\left( B,\left\Vert \cdot\right\Vert _{B}\right) $ is called a
\textit{Banach module }over a Banach algebra $\left( A,\left\Vert
\cdot\right\Vert _{A}\right) $ if $B$ is a module over $A$ in the algebraic
sense for some multiplication, $\left( a,b\right) \rightarrow a.b\ ,\ $and
satisfies $\left\Vert a.b\right\Vert _{B}\leq M \left\Vert a\right\Vert
_{A}\left\Vert b\right\Vert _{B} $ for some $M>1.$

Let $B_{1},B_{2}$ be Banach $A$-modules.Then a \textit{multiplier (or
module homomorphism)} from $B_{1}$ to $B_{2}$ is a bounded linear operator $
T $ from $B_{1}$ to $B_{2}$ which commutes with the module multiplication,
i.e. $T(ab)=aT(b)$ for all $a\in A,\  b\in B_{1}$. The space of multipliers
from $B_{1}$ to $B_{2}$ is denoted by $M(B_{1},B_{2})$ (or $\mathrm{Hom}(B_{1},B_{2})$). By Corollary 2.13 in \cite{ri1},
\begin{equation}
\mathrm{Hom}_{A}\left( B_{1},B_{2}^{\ast }\right) \cong \left( B_{1}\otimes
_{A}B_{2}\right) ^{\ast },
\end{equation}
where $B_{2}^{\ast }$ is the dual of $B$ and $\otimes _{A}$ is the $A-$
module tensor product, \cite{ri1,ri2}.

%{Throughout this work we will assume that the weights  $a(x),b(x)\in L^1( R^{n}).$}

\section{The space $A_{q),\theta_2}^{p),\theta_1}\left( R^{n}\right) $ and some basic properties} 

%Let $ 0<\varepsilon \leq p-1, \  p\ge1,$ and $ 0<\eta\leq q-1, \  q\ge1.$ We have the embedding  $L^{p),\theta_1}_a\left(  R^{n}\right)\hookrightarrow{L^{p-\varepsilon }(R^{n}, a^{\frac{\varepsilon}p})}$ and $L^{q),\theta_2}_b\left(  R^{n}\right)\hookrightarrow{L^{q-\eta }(R^{n}, b^{\frac{\eta}q})}.$%every element of $L^{p),\theta_1}_a ( R^{n})$ is a tempered distribution and so has generalized Fourier transform.

In this section we will define a functional space $A_{q),\theta_2}^{p),\theta_1}\left( R^{n},a,b\right) $ (for short $A_{q),\theta_2}^{p),\theta_1}\left( R^{n}\right)), $ if a=b we denote by $A_{q),\theta_2}^{p),\theta_1}\left( R^{n},a\right) $     and discuss some basic properties of this space.

\begin{definition}
Let $ a(x),b(x)$ be weight functions defined on $\mathbb R^{n}$ and $\widehat{\mathbb R^n},$ respectively, where  $\widehat{\mathbb R^n}$  is the Pontryagin dual of $\widehat{\mathbb R^n}$ \cite{ru} . Assume that  $1<p,q<\infty,\ \theta_1, \theta_2>0.$  We denote by $A_{q),\theta_2}^{p),\theta_1}\left(\mathbb  R^{n},a,b\right) $ (or $A_{q),\theta_2}^{p),\theta_1}\left( \mathbb R^{n}\right), $)
the set of functions $f$ for which $\hat f$  Fourier transform of {f} exists and  belongs to   $L^{q),\theta_2}_b( \widehat{\mathbb  R^n})$ (It is known that $\widehat{\mathbb R^n}=\mathbb R^{n}).$ It is easy to show that this set is a vector space over $ \mathbb R^{n}.$  Let's equip this vector space with the norm
\begin{align}
\left\Vert f\right\Vert _{A_{q),\theta_2}^{p),\theta_1}} =\left\Vert
f\right\Vert _{L_a^{p),\theta_1}}+\left\Vert \hat{f}\right\Vert _{L_b^{q),\theta_2}}. 
\end{align}
\end{definition}
Now let's give some examples to show that this space is not an empty set.
\begin{example}
 Assume that $a(x),b(x)\in L^1(\mathbb R^n).$ Then for   $ 0<\varepsilon \leq p-1, \  p>1,$ and $ 0<\eta\leq q-1, \  q>1$ we have $L^{p}\left( \mathbb R^{n} \right) \subset L_a^{p),\theta_1}( \mathbb R^{n}) $ and $L^{q}\left(\mathbb R^{n} \right) \subset L_b^{q),\theta_2}(\widehat R^{n}). $ Since  $\mathcal{S}\left(  R^{n}\right)\subset L^{p}\left(  R^{n} \right) $ and $\mathcal{S}\left( \widehat{\mathbb R^{n}}\right)\subset L^q\left( \widehat{ \mathbb R^{n}}\right) ,$ and Fourier transforms induce an isomorphism between $\mathcal{S}\left(\mathbb  R^{n}\right) $ and $\mathcal{S}\left( \widehat{\mathbb R^{n}}\right) ,$ then $\mathcal{S}\left(\mathbb  R^{n}\right)  \subset A_{q),\theta_2}^{p),\theta_1}\left(\mathbb R^{n}\right),$ where $\mathcal{S}\left(  \mathbb R^{n}\right) $ and $\mathcal{S}\left( \widehat{\mathbb R^{n}}\right) $ are  Schwartz space over $\mathbb R^{n}$ and $\widehat{\mathbb R^n},$ respectively.
\end{example}
\begin{example}
 Let $a(x),b(x)\in L^1(\mathbb R^n).$  Then  from  Example 1 we obtain $ C^{\infty}_0( \mathbb R^n)\subset \mathcal{S}\left(\mathbb  R^{n}\right)  \subset A_{q),\theta_2}^{p),\theta_1}\left( R^{n}\right).$ 
\end{example}

\begin{theorem}
Let $1<p,q<\infty,\ \theta_1, \theta_2>0\  \text{and}\  a(x), b(x)\ weight functions $ on $ R^{n}$. Then the norm of  $A_{q),\theta_2}^{p),\theta_1}\left( R^{n}\right)$ satisfies the following properties, where $f,g$ and $f_{n
\text{ }}$are in $A_{q),\theta_2}^{p),\theta_1}\left( R^{n}\right),$ $\lambda
\geq 0$ and $E$ is a measurable subset of $ \mathbb R^{n}:$
\end{theorem}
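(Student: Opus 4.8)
The plan is to reduce every listed property to the corresponding property of the two constituent norms $\|\cdot\|_{L_a^{p),\theta_1}}$ and $\|\cdot\|_{L_b^{q),\theta_2}}$, and to handle those in turn by establishing the property first at the level of the ordinary weighted Lebesgue norm $\|\cdot\|_{L^{p-\varepsilon}(\Omega,a^{\varepsilon/p})}$ (which is a genuine norm for each $0<\varepsilon\leq p-1$, since then $p-\varepsilon\geq 1$) and then pushing it through the operation $f\mapsto\sup_{0<\varepsilon\leq p-1}\varepsilon^{\theta}\,\|f\|_{L^{p-\varepsilon}}$. Two structural facts will be used throughout: the generalized Fourier transform is linear on $\mathcal S'(\mathbb R^n)$, so that $\widehat{\lambda f}=\lambda\hat f$ and $\widehat{f+g}=\hat f+\hat g$; and, as already recorded in the excerpt, each $L_a^{p),\theta}(\Omega)$ is a Banach space, so each summand is already known to be a norm. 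The first observation lets the Fourier-transform term be treated exactly like the spatial term once linearity is applied, and the second disposes immediately of nonnegativity, homogeneity, and the triangle inequality for $\|\cdot\|_{A}$ as a sum of two norms.

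For the genuinely finer properties I would argue at the $\varepsilon$-level. \emph{Homogeneity} for $\lambda\geq 0$: since $\varepsilon^{\theta}$ is independent of $f$ and $\|\lambda f\|_{L^{p-\varepsilon}(\Omega,a^{\varepsilon/p})}=\lambda\|f\|_{L^{p-\varepsilon}(\Omega,a^{\varepsilon/p})}$, the factor $\lambda$ pulls out of the supremum; combined with $\widehat{\lambda f}=\lambda\hat f$ this gives $\|\lambda f\|_{A}=\lambda\|f\|_{A}$. \emph{Definiteness}: if $\|f\|_{A}=0$ then in particular $\|f\|_{L_a^{p),\theta_1}}=0$, so $\varepsilon^{\theta_1}\|f\|_{L^{p-\varepsilon}(\Omega,a^{\varepsilon/p})}=0$ for every admissible $\varepsilon$; fixing one $\varepsilon>0$ and using definiteness of the weighted Lebesgue norm forces $f=0$ a.e. \emph{Triangle inequality}: apply Minkowski in $L^{p-\varepsilon}(\Omega,a^{\varepsilon/p})$, multiply by $\varepsilon^{\theta}\geq 0$, and invoke the subadditivity of the supremum, $\sup_\varepsilon(u_\varepsilon+v_\varepsilon)\leq\sup_\varepsilon u_\varepsilon+\sup_\varepsilon v_\varepsilon$, to obtain it for each summand, then add the two summands and use linearity of $\mathcal F$.

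For the \emph{solidity/monotonicity property} attached to the measurable set $E$, I would note that $|f\chi_E|\leq|f|$ pointwise, whence $\|f\chi_E\|_{L^{p-\varepsilon}(\Omega,a^{\varepsilon/p})}\leq\|f\|_{L^{p-\varepsilon}(\Omega,a^{\varepsilon/p})}$ for each $\varepsilon$; multiplying by $\varepsilon^{\theta_1}$ and taking the supremum yields $\|f\chi_E\|_{L_a^{p),\theta_1}}\leq\|f\|_{L_a^{p),\theta_1}}$. For the \emph{sequence property} for $f_n$, if it is a Fatou-type statement I would apply Fatou's lemma inside each integral $\int_\Omega|f|^{p-\varepsilon}a^{\varepsilon/p}\,dx$, take the $(p-\varepsilon)$-th root and multiply by $\varepsilon^{\theta}$ to get lower semicontinuity of each $\varepsilon$-term, and then use the elementary inequality $\sup_\varepsilon\liminf_n\leq\liminf_n\sup_\varepsilon$; if instead it is mere continuity of the norm, it is the reverse triangle inequality $\bigl|\,\|f_n\|_{A}-\|f\|_{A}\,\bigr|\leq\|f_n-f\|_{A}$, immediate from the triangle inequality already proved.

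The only real subtlety, and the step I would watch most carefully, is the interchange of the supremum over $\varepsilon$ with the limiting operation in the sequence property: the inequality $\sup_\varepsilon\liminf_n\leq\liminf_n\sup_\varepsilon$ is free, but the opposite direction is generally false, so the property must be the one that uses the correct direction, and one should verify that the supremum in $\varepsilon$ is not attained in a way that breaks lower semicontinuity. A secondary point worth flagging explicitly is that the full solidity/lattice property cannot hold for the combined norm $\|\cdot\|_{A}$, because the Fourier-transform term $\|\hat f\|_{L_b^{q),\theta_2}}$ is not monotone under $|f\chi_E|\leq|f|$; hence any $E$-property must be read as pertaining to the $L_a^{p),\theta_1}$-component (or otherwise stated componentwise). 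All remaining verifications are routine once the $\varepsilon$-level estimates above are in place.
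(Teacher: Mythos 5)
Your reduction of properties 1--4 to the corresponding properties of the two constituent norms matches the paper, which disposes of them in exactly one sentence the same way. The two substantive items, however, are not handled correctly. First, you misidentify the property attached to the set $E$: it is not a solidity/lattice statement $\|f\chi_E\|\leq\|f\|$, but a local $L^1$-embedding: for a weight $a(x)\geq 1$ and $E\subset\mathbb R^{n}$ \emph{compact}, $\int_E|f|\,dx\leq C_0(p,E)\,\|f\|_{A_{q),\theta_2}^{p),\theta_1}}$. The paper proves this by writing $\int_E|f|\,dx=\int_{\mathbb R^{n}}\bigl|f\,a^{\varepsilon/(p(p-\varepsilon))}\bigr|\,\bigl|\chi_E\,a^{-\varepsilon/(p(p-\varepsilon))}\bigr|\,dx$ and applying H\"older with exponents $p-\varepsilon$ and $(p-\varepsilon)'$; the hypotheses $a\geq 1$ and $E$ compact are precisely what make $\|\chi_E a^{-\varepsilon/(p(p-\varepsilon))}\|_{(p-\varepsilon)'}$ finite, after which $\|f\|_{L^{p-\varepsilon}(\mathbb R^{n},a^{\varepsilon/p})}$ is dominated by the grand norm. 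Your solidity inequality neither is this statement nor implies it (solidity keeps you at the exponent $p-\varepsilon$; one needs H\"older to descend to $L^1$ on a set of finite measure), so this property is simply unproved in your proposal. Indeed your ``secondary point'' about reading the $E$-property componentwise is a symptom of the misreading: the actual inequality is perfectly compatible with the combined norm, since it factors through $\|f\|_{L_a^{p),\theta_1}}\leq\|f\|_{A_{q),\theta_2}^{p),\theta_1}}$.

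Second, the sequence property is the monotone-convergence statement: $0\leq f_n\uparrow f$ a.e.\ implies $\|f_n\|_{A_{q),\theta_2}^{p),\theta_1}}\uparrow\|f\|_{A_{q),\theta_2}^{p),\theta_1}}$, and this concerns the \emph{full} norm, Fourier term included. Your Fatou argument (which, once supplemented by the easy monotonicity $\|f_n\|\leq\|f\|$ coming from $f_n\leq f$ a.e., does work) addresses only the component $\|\cdot\|_{L_a^{p),\theta_1}}$; you never say what happens to $\|\hat f_n\|_{L_b^{q),\theta_2}}$, and nothing in your argument yields its convergence to $\|\hat f\|_{L_b^{q),\theta_2}}$, since $f_n\uparrow f$ gives no pointwise monotonicity (nor, a priori, pointwise convergence) of the complex-valued transforms $\hat f_n$. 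The paper treats the spatial term by interchanging $\sup_n$ with $\sup_{0<\varepsilon\leq p-1}$ and invoking the monotone convergence theorem --- equivalent in substance to your route --- and then asserts $\hat f_n\uparrow\hat f$ so as to run the identical computation on the Fourier side; whether that assertion is itself fully justified is debatable, but your proposal omits this half of the claim entirely, which is a genuine gap.
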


$1.\left\Vert f\right\Vert _{A_{q),\theta_2}^{p),\theta_1}}\geq 0;$

$2.\left\Vert f\right\Vert _{A_{q),\theta_2}^{p),\theta_1}}=0 $ if and only if $f=0$ a.e in $\mathbb  R^{n}; $

$3.\left\Vert\lambda f\right\Vert _{A_{q),\theta_2}^{p),\theta_1}}=\lambda\left\Vert f\right\Vert _{A_{q),\theta_2}^{p),\theta_1}};$

$4.\left\Vert f+g\right\Vert _{A_{q),\theta_2}^{p),\theta_1}}\leq \left\Vert f\right\Vert _{A_{q),\theta_2}^{p),\theta_1}}+\left\Vert g\right\Vert _{A_{q),\theta_2}^{p),\theta_1}};$

%.$ if $\left\vert g\right\vert \leq \left\vert f\right\vert $ a.e. in $
%\Omega ,$ then $\left\Vert g\right\Vert _{W(L_{a_1}^{p)}(\Omega), L_{a_2}^{q)}(\Omega))
%}\leq \left\Vert f\right\Vert _{W(L_{a_1}^{p)}(\Omega), L_{a_2}^{q)}(\Omega))}$

$5.$ If $0\leq f_{n}\uparrow f$ a.e. in $ \mathbb R^{n} ,$ then $\left\Vert
f_{n}\right\Vert _{A_{q),\theta_2}^{p),\theta_1}}\uparrow
\left\Vert f\right\Vert _{A_{q),\theta_2}^{p),\theta_1}};$

%6. if $\left\vert E\right\vert<\infty$  then, $ \left\Vert \chi _{E}\right\Vert _{W(L_{a}^{p)}(\Omega), L_{a}^{q)}(\Omega))
%}<+\infty $

6. Let $1\leq a(x)<\infty $ be a weight function and let $ E\subset\mathbb R^{n}$ be a compact subset. Then
\begin{align}
 \int\limits_{E}\left\vert f\right\vert dx\leq C_0\left( p,E\right)
\left\Vert f\right\Vert _{A_{q),\theta_2}^{p),\theta_1}}
\end{align}
 for some $0<C_0<\infty .$

The first four properties follow from the definition of the norm  
$\left\Vert .\right\Vert_{A_{q),\theta_2}^{p),\theta_1}},$ and the corresponding properties of the generalized grand Lebesgue space.
\begin{proof}

Proof of property $5.$

If $ 0\leq f_{n}\uparrow f$ a.e in $\mathbb  R^{n}$ then
\begin{align}
\notag \|f_n\|_{L_{a}^{p),\theta_1}}\uparrow\sup_{n}\|f_n\|_{L_{a}^{p),\theta_1}}&= \sup_{n}( \sup_{0<\varepsilon\leq p-1}\varepsilon^{\theta_1}\|f_n\|_{L^{p-\varepsilon}( R^{n}, a^{\frac{\varepsilon}p})})\\&=\sup_{0<\varepsilon\leq p-1}\varepsilon^{\theta_1}( \sup_{n}\|f_n\|_{L^{p-\notag\varepsilon}( R^{n}, a^{\frac{\varepsilon}p})})\\&=\sup_{0<\varepsilon\leq p-1}\varepsilon^{\theta_1}( \|f\|_{L^{p-\varepsilon}( R^{n}, a^{\frac{\varepsilon}p})})=\|f\|_{L_{a}^{p),\theta_1}( R^{n}).}  
\end{align}  
Since $f_n \uparrow f$ a.e in $ R^{n},$ then$\hat f_n \uparrow\hat f $ in $\mathbb R^{n}.$ Similar to the expression (3.3)  we have
\begin{align}
\|\hat f_n\|_{L_{b}^{q),\theta_2}}\uparrow\sup_{n}\|\hat f_n\|_{L_{b}^{q),\theta_2}}&=\|\hat f\|_{L_{b}^{q),\theta_2}( \mathbb R^{n}).}
\end{align}
Thus from (3.3) and (3.4 we observe that
\begin{align*}
\|f_n\|_{A_{q),\theta_2}^{p),\theta_1}}&=\|{f_n}\|_{ L_{a}^{p)\theta_1}}+\|\hat f_n\|_{L_{b}^{q),\theta_2}}\uparrow\|f\|_{A_{q),\theta_2}^{p),\theta_1}}.
\end{align*}

Proof of property {6}:

 Let  $f\in A_{q),\theta_2}^{p),\theta_1}\left(\mathbb R^{n}\right)$ and $0<\varepsilon\leq p-1.$ Then $ f \in
L^{p),\theta_1}_a(\mathbb R^{n}) \ \text{and}\ \hat{f}\in L^{q),\theta_2}_b (\widehat{  \mathbb R^{n}}).$   By (2.10) we write $f\in{L^{p-\varepsilon }(\mathbb  R^{n}, a^{\frac{\varepsilon}p})}.$ Thus $f.a^{\frac{\varepsilon}{p(p-\varepsilon)}}\in\ L^{p-\varepsilon}(\mathbb R^{n})$ and $\chi_E.a^{-{\frac{\varepsilon}{p(p-\varepsilon)}}}\in L^{(p-\varepsilon)^{\prime}}( \mathbb R^{n}),$ where $\frac{1}{p-\varepsilon}+\frac{1}{(p-\varepsilon)^{'}}=1.$ Hence by Hölder's inequality we write
\begin{align}
\int_{E}\mid f\mid dx=&\int_{\mathbb R^{n}}\mid f\chi_{E}\mid dx=\int_{{\mathbb R^{n}}}\mid f.a^{\frac{\varepsilon}{p(p-\varepsilon)}}\mid \mid \chi_{E}a^{-{\frac{\varepsilon}{p(p-\varepsilon}}}\mid dx
\\& \leq\| f.a^{\frac{\varepsilon}{p(p-\varepsilon)}}\|_{L^{p-\varepsilon}}\|\chi_{E}a^{-{\frac{\varepsilon}{p(p-\varepsilon}}}\|\notag _{L^{(p-\varepsilon)^{'}}}\\&< C_1(\varepsilon,E) \|f\|_{L^{p-\notag\varepsilon}(\mathbb R^{n}, a^{\frac{\varepsilon}p})}< C_0\|{f}\|_{ L_{a}^{p)\theta_1}}\\&<C_0\left\Vert f\right\Vert _{A_{q),\theta_2}^{p),\theta_1}},
\end{align}  
where $C_0=\sup_{0<\varepsilon\leq p-1}\left\{C_1(E,\varepsilon)\right\}.$ This completes the proof.
\end{proof}
\begin{lemma} Let $1\leq p\leq\infty$ and let $a(x)$ be Beurling's weight function. For $f\in L^{p),\theta}_a\left(  \mathbb R^{n}\right)$ and $h\in\mathcal{S}(\mathbb R^{n})$ the integral 
\begin{equation}
<f,g>=\int_{ \mathbb R^{n}}f(x)g(x)dx
\end{equation}
converges, and $(f,g)\rightarrow <f,g>$ is a continuous bilinear form on $ L^{p),\theta}_a\left(  \mathbb R^{n}\right)X\mathcal{S}(\mathbb R^{n}).$ Hence $f\rightarrow <f,.>$ is a continuous linear embedding $ L^{p),\theta}_a\left(  \mathbb R^{n}\right)\hookrightarrow\mathcal{S}^{\prime }(\mathbb R^{n}).$
\end{lemma}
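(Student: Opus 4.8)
The plan is to reduce the whole statement to an elementary weighted $L^1$--$L^\infty$ Hölder estimate, exploiting the embedding (2.10)--(2.11) together with the Beurling hypothesis $a(x)\ge 1$. The key preliminary step is to observe that the \emph{endpoint} exponent $\varepsilon=p-1$ is admissible in (2.10), so that every $f\in L^{p),\theta}_a(\mathbb R^n)$ lies in $L^1\!\left(\mathbb R^n,a^{\frac{p-1}{p}}\right)$; since $a\ge 1$ gives $a^{\frac{p-1}{p}}\ge 1$, this forces $f\in L^1(\mathbb R^n)$ with
\[
\|f\|_{L^1(\mathbb R^n)}\le \|f\|_{L^1\left(\mathbb R^n,\,a^{\frac{p-1}{p}}\right)}\le (p-1)^{-\theta}\,\|f\|_{L^{p),\theta}_a}.
\]
This is exactly the point at which $a\ge 1$ is needed: it lets one discard the negative power of the weight that would otherwise appear on the test-function side.

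Granting this, I would finish the analytic part in one stroke. For $g\in\mathcal S(\mathbb R^n)$ we have $g\in L^\infty(\mathbb R^n)$ with $\|g\|_\infty=\|g\|_{0,0}$, hence
\[
\int_{\mathbb R^n}|f(x)g(x)|\,dx\le \|f\|_{L^1(\mathbb R^n)}\,\|g\|_\infty
\le (p-1)^{-\theta}\,\|f\|_{L^{p),\theta}_a}\,\|g\|_{0,0}<\infty ,
\]
which establishes convergence of the integral. The same chain of inequalities gives the bilinear bound $|\langle f,g\rangle|\le (p-1)^{-\theta}\|f\|_{L^{p),\theta}_a}\|g\|_{0,0}$ for all $f$ and $g$; since $\|\cdot\|_{0,0}$ is one of the defining seminorms of $\mathcal S(\mathbb R^n)$, this single estimate yields at once the joint continuity of $(f,g)\mapsto\langle f,g\rangle$ on $L^{p),\theta}_a(\mathbb R^n)\times\mathcal S(\mathbb R^n)$.

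For the embedding into $\mathcal S'$, fix $f$: the bound above shows $T_f\colon g\mapsto\langle f,g\rangle$ is a continuous linear functional, i.e.\ $T_f\in\mathcal S'(\mathbb R^n)$, and the map $f\mapsto T_f$ is clearly linear and, by the same estimate, continuous. The only step needing a separate idea is injectivity. Here I would use $C_0^\infty(\mathbb R^n)\subset\mathcal S(\mathbb R^n)$ together with $f\in L^1_{loc}(\mathbb R^n)$ (from the key step): if $T_f=0$ then $\int_{\mathbb R^n} f\varphi\,dx=0$ for every $\varphi\in C_0^\infty(\mathbb R^n)$, so by the fundamental lemma of the calculus of variations (du Bois-Reymond) $f=0$ a.e. Thus $f\mapsto T_f$ is injective, giving the asserted continuous linear embedding.

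I expect no deep obstacle; the only care needed is bookkeeping with the exponent. One must confirm that $\varepsilon=p-1$ is allowed (the range in (2.8) is $0<\varepsilon\le p-1$) and that the resulting exponent $p-\varepsilon=1$ pairs with $L^\infty$, which is what makes the sup-norm seminorm $\|g\|_{0,0}$ appear and lets the factor $a^{-\frac{p-1}{p}}\le 1$ be dropped. A fully equivalent alternative avoids the endpoint by fixing any $\varepsilon\in(0,p-1]$ and bounding $\|g\,a^{-\frac{\varepsilon}{p(p-\varepsilon)}}\|_{L^{(p-\varepsilon)'}}\le \|g\|_{L^{(p-\varepsilon)'}}\le C\,\|g\|_{k,0}$ for $k$ large, via the rapid decay of $g$; I would present the $\varepsilon=p-1$ route as the cleanest.
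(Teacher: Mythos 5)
Your proof is correct and follows essentially the same route as the paper's: both pass from $L^{p),\theta}_a(\mathbb R^n)$ to an unweighted Lebesgue space via $L^{p),\theta}_a(\mathbb R^n)\hookrightarrow L^{p-\varepsilon}(\mathbb R^n, a^{\frac{\varepsilon}{p}})\hookrightarrow L^{p-\varepsilon}(\mathbb R^n)$ (the last step exactly where $a\geq 1$ is used), and then conclude by H\"older against Schwartz seminorms; your endpoint choice $\varepsilon=p-1$ merely fixes the H\"older pair to $(1,\infty)$ so that only the seminorm $\|g\|_{0,0}$ is needed. Your explicit constant $(p-1)^{-\theta}$ and the du Bois-Reymond injectivity argument are sound additions that make precise what the paper dismisses as ``easy to prove.''
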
 
\begin{proof}
Since  $ L^{p),\theta}_a\left(  \mathbb R^{n}\right)\hookrightarrow{L^{p-\varepsilon }(\mathbb  R^{n}, a^{\frac{\varepsilon}p})}$ and $a(x)\ge1,$ then $ L^{p),\theta}_a\left(  \mathbb R^{n}\right)\hookrightarrow{L^{p-\varepsilon }(\mathbb  R^{n} )}.$ Then it is easy to prove that the bilinear form $(f,g)\rightarrow <f,g>$ on $ L^{p),\theta}_a\left(  \mathbb R^{n}\right)X\mathcal{S}(\mathbb R^{n})$ is continuous. Thus  $ L^{p),\theta}_a\left(  \mathbb R^{n}\right)\hookrightarrow\mathcal{S}^{\prime }(\mathbb R^{n}).$
\end{proof}
\begin{remark}
Lemma 1 plays important role in the generalization of the classical Fourier transform to the spaces  $ L^{p),\theta}_a\left(  \mathbb R^{n}\right)$ in the following manner: It is known from Lemma 1 that  $ L^{p),\theta}_a\left(  \mathbb R^{n}\right)\hookrightarrow\mathcal{S}^{\prime }(\mathbb R^{n}).$ We also know that  generalized Fourier transform yields isometry between $ \mathcal{S}^{\prime }(\mathbb R^{n})$ and $ \mathcal{S}^{\prime }(\hat{\mathbb R^{n}})=\mathcal{S}^{\prime }(\mathbb R^{n}).$ Now we  define the Fourier transform 
\begin{align}
 \mathcal{F}:L^{p),\theta}_a\left(  \mathbb R^{n}\right)\hookrightarrow\mathcal{S}^{\prime }(\hat{\mathbb R^{n}})
\end{align}
as the composition of the map of Lemma 1, with the generalized Fourier transform 
\begin{align}
 \mathcal{F}:\mathcal{S}^{\prime }(\mathbb R^{n})\hookrightarrow\mathcal{S}^{\prime }(\hat{\mathbb R^{n}}).
\end{align}
\end{remark}

\begin{theorem}
Let $a(x)$ be Beurling's weight function. Then the normed space $A_{q),\theta_2}^{p),\theta_1}\left(\mathbb R^{n}\right)$ is a
Banach space.
\end{theorem}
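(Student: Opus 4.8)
The plan is to establish completeness, since properties $1$--$4$ of Theorem 1 already show that $\left\Vert\cdot\right\Vert_{A_{q),\theta_2}^{p),\theta_1}}$ is a genuine norm. So I would start from an arbitrary Cauchy sequence $(f_n)$ in $A_{q),\theta_2}^{p),\theta_1}(\mathbb{R}^n)$ and construct a limit lying in the space.

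First I would decouple the two summands of the norm. Directly from the definition
\[
\left\Vert f_n-f_m\right\Vert_{L_a^{p),\theta_1}}\le\left\Vert f_n-f_m\right\Vert_{A_{q),\theta_2}^{p),\theta_1}},\qquad \left\Vert\widehat{f_n}-\widehat{f_m}\right\Vert_{L_b^{q),\theta_2}}\le\left\Vert f_n-f_m\right\Vert_{A_{q),\theta_2}^{p),\theta_1}},
\]
so $(f_n)$ is Cauchy in $L_a^{p),\theta_1}(\mathbb{R}^n)$ and $(\widehat{f_n})$ is Cauchy in $L_b^{q),\theta_2}(\widehat{\mathbb{R}^n})$. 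Since both of these generalized grand Lebesgue spaces are Banach spaces (as recalled in Section 2), there exist $f\in L_a^{p),\theta_1}(\mathbb{R}^n)$ and $g\in L_b^{q),\theta_2}(\widehat{\mathbb{R}^n})$ with $f_n\to f$ and $\widehat{f_n}\to g$ in the respective norms.

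The crux of the argument, and the step I expect to be the main obstacle, is to identify the distributional limit $g$ with the generalized Fourier transform $\widehat{f}$ of the candidate limit $f$; one cannot simply take Fourier transforms inside the grand Lebesgue norms. Here I would invoke Lemma 1 together with Remark 1. Applying Lemma 1 to each of the two spaces yields the continuous embeddings $L_a^{p),\theta_1}(\mathbb{R}^n)\hookrightarrow\mathcal{S}'(\mathbb{R}^n)$ and $L_b^{q),\theta_2}(\widehat{\mathbb{R}^n})\hookrightarrow\mathcal{S}'(\widehat{\mathbb{R}^n})$, so norm convergence passes to convergence of tempered distributions: $f_n\to f$ in $\mathcal{S}'(\mathbb{R}^n)$ and $\widehat{f_n}\to g$ in $\mathcal{S}'(\widehat{\mathbb{R}^n})$. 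By Remark 1 the generalized Fourier transform $\mathcal{F}\colon\mathcal{S}'(\mathbb{R}^n)\to\mathcal{S}'(\widehat{\mathbb{R}^n})$ is an isomorphism, hence continuous, so $f_n\to f$ in $\mathcal{S}'$ forces $\widehat{f_n}=\mathcal{F}(f_n)\to\mathcal{F}(f)=\widehat{f}$ in $\mathcal{S}'(\widehat{\mathbb{R}^n})$. Because $\mathcal{S}'$ is Hausdorff, limits are unique, and comparing the two convergences gives $g=\widehat{f}$ as tempered distributions, and therefore as elements of $L_b^{q),\theta_2}(\widehat{\mathbb{R}^n})$.

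It then follows that $\widehat{f}=g\in L_b^{q),\theta_2}(\widehat{\mathbb{R}^n})$, so $f\in A_{q),\theta_2}^{p),\theta_1}(\mathbb{R}^n)$, and
\[
\left\Vert f_n-f\right\Vert_{A_{q),\theta_2}^{p),\theta_1}}=\left\Vert f_n-f\right\Vert_{L_a^{p),\theta_1}}+\left\Vert\widehat{f_n}-\widehat{f}\right\Vert_{L_b^{q),\theta_2}}\longrightarrow 0,
\]
which shows $f_n\to f$ in $A_{q),\theta_2}^{p),\theta_1}(\mathbb{R}^n)$ and completes the proof. I emphasize that the Beurling hypothesis on the grandizer enters precisely through Lemma 1: it is what guarantees the continuous embedding of the grand Lebesgue space into $\mathcal{S}'$, and this embedding is the mechanism that lets the two separately obtained limits be matched through the distributional Fourier transform.
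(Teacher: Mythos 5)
Your proof is correct and takes essentially the same route as the paper's: decouple the Cauchy sequence into its two components, use completeness of $L_a^{p),\theta_1}(\mathbb{R}^n)$ and $L_b^{q),\theta_2}(\widehat{\mathbb{R}^n})$, and identify the limit $g$ with $\widehat{f}$ through the embedding into $\mathcal{S}'$ furnished by Lemma 1 and Remark 1. If anything, you make the identification step more explicit than the paper (continuity of $\mathcal{F}$ on $\mathcal{S}'$ plus uniqueness of distributional limits, noting that Lemma 1 must also be applied to the space $L_b^{q),\theta_2}(\widehat{\mathbb{R}^n})$ containing the second limit), which is the same mechanism the paper invokes tersely with ``Thus $\hat{f}=h$.''
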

\begin{proof}%Since  $a(x)$ is a Beurling's weight function, then $L^{p),\theta_1}_a\left(  R^{n}\right) \hookrightarrow \mathcal{S}^{^{\prime }}$  and $  L^{q),\theta_2}_b(\widehat{\mathbb  R^{n})}\hookrightarrow\ \mathcal{S}^{^{\prime }}$ and so  $A_{q),\theta_2}^{p),\theta_1}\left( R^{n}\right)\hookrightarrow  \mathcal{S}^{^{\prime }}.$ It is known that the generalized Fourier transform $\mathcal{F}$ is linear  one-to-one and bicontinuous mapping from $ \mathcal{S}^{^{\prime }}$ to $ \mathcal{S}^{^{\prime }}$ (see \cite{h}).
Let $\left( f_{n}\right) _{n\in\mathbb{N}}$ be a Cauchy sequence in $A_{q),\theta_2}^{p),\theta_1}\left( R^{n}\right).$ Then $\left( f_{n}\right) _{n\in \mathbb{N}}$ is a Cauchy sequence in $L^{p),\theta_1}_a\left(  R^{n}\right)  $ and $(\hat{f}_{n})_{n\in \mathbb{N}}$ is a Cauchy sequence in $ L^{q),\theta_2}_b(\widehat{\mathbb  R^{n}}).$ Since  $L^{p),\theta_1}_a\left( \mathbb R^{n}\right)  $ and  $ L^{q),\theta_2}_b(\widehat{\mathbb  R^{n}})$ are Banach spaces (see \cite{u},\cite{su3}), there exist $f\in L^{p),\theta_1}_a\left(  \mathbb R^{n}\right)  $ and
$h\in  L^{q),\theta_2}_b(\widehat{ \mathbb R^{n}}) $ such that $f_{n}\rightarrow f\ $\ in $ L^{p),\theta_1}_a\left( \mathbb R^{n}\right)    $ and $\hat{f}_{n}\rightarrow h$ in $ L^{q),\theta2}_b(\widehat{\mathbb  R^{n}}).$ Then for given $
\eta >0,$ there exists $n_{0}\in
%TCIMACRO{\U{2115} }%
%BeginExpansion
\mathbb{N}
%EndExpansion
,$ such that
\begin{equation}
\left\Vert  f_n-f\right\Vert _{L_a^{p),\theta_1}}<\frac{\eta }{2}\ \text{ and }\
\left\Vert \hat{f}_{n}-h\right\Vert _{L_b^{q),\theta_2}}<\frac{\eta }{2}
\end{equation}
for all $n>n_{0}.$  But by $(3.8)$ and $3.9$ in Remark 1, the Fourier transform  % On the otherhand by the continuty of generalized Fourier transform 
 $\hat{f}\rightarrow \hat{f}$ %in $L_b ^{q),\theta_2 }
%\widehat{ R^{n}})$ and by completeness of this space $ \hat{f}\in L^{q) }(\widehat R^{n)})$.  Thus, because of $\hat{f}
%_{n}\rightarrow h,$ and $\hat{f}_{n}\rightarrow \hat{f}$ in $L_b^{q),\theta_2 }(\widehat R^{n)}$ and the uniqueness of limit,
is continuous from  $L^{p),\theta}_a\left(  \mathbb R^{n}\right)$ into $ \mathcal{S}^{\prime }(\hat{\mathbb R^{n}}).$  Thus\ $\hat{f}$ $=h,$ and we have
\begin{eqnarray}
\left\Vert f_{n}-f\right\Vert _{A_{q),\theta_2}^{p),\theta_1} }
&=&\left\Vert f_{n}-f\right\Vert _{L_a^{p),\theta_1}}+\left\Vert \widehat{f_{n}}
-\widehat{f}\right\Vert _{L_a^{q),\theta2}} \\
\notag&=&\left\Vert f_{n}-f\right\Vert _{L_a^{p),\theta_1}}+\left\Vert \widehat{f_{n}}
-{h}\right\Vert _{L_a^{q),\theta_2}}<\frac{\eta }{2}+\frac{\eta
}{2}=\eta 
\end{eqnarray}
for all $n\geq n_{0},$ and so  $A_{q),\theta_2}^{p),\theta_1}\left( R^{n}\right)$ is a Banach space.

\end{proof}

\begin{proposition}
$1.$ Let $a(x)$ be a Beurling weight function. Then the space  $A_{q),\theta_2}^{p),\theta_1}\left(\mathbb R^{n}\right)$ is translation invariant.  

$2.$ The space  $ A_{q),\theta_2}^{p),\theta_1}\left(\mathbb R^{n}\right) $ is modulation invariant, i.e if  $f\in A_{q),\theta_2}^{p),\theta_1}\left(\mathbb R^{n}\right) ,$ then $M_\xi f\in  A_{q),\theta_2}^{p),\theta_1}\left( \mathbb R^{n}\right) .$
\end{proposition}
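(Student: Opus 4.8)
The plan is to reduce both parts to the interplay between the operators $T_x,M_\xi$ and the Fourier transform, together with one translation-invariance estimate for the generalized grand Lebesgue spaces. With the sign convention $\hat f(\gamma)=\int_{\mathbb R^n}f(x)e^{-i\langle\gamma,x\rangle}\,dx$, a change of variables (valid on $\mathcal S(\mathbb R^n)$ and extended to $\mathcal S'(\mathbb R^n)$ through the duality defining the generalized Fourier transform, as in Remark 1) yields the commutation relations $\widehat{T_x f}=M_{-x}\hat f$ and $\widehat{M_\xi f}=T_\xi\hat f$. Because every grand norm occurring here depends only on moduli, I only use $|M_{-x}\hat f|=|\hat f|$ and $|T_\xi\hat f(\gamma)|=|\hat f(\gamma-\xi)|$, so the phase factors play no role.

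The key auxiliary fact is that for a Beurling weight $\omega$ the space $L_\omega^{r),\theta}(\mathbb R^n)$ is translation invariant, with $\|T_x g\|_{L_\omega^{r),\theta}}\le\omega(x)^{(r-1)/r}\|g\|_{L_\omega^{r),\theta}}$. To prove this I fix $0<\varepsilon\le r-1$, substitute $u=t-x$ and apply submultiplicativity $\omega(u+x)\le\omega(u)\omega(x)$ to get $\|T_x g\|_{L^{r-\varepsilon}(\mathbb R^n,\omega^{\varepsilon/r})}\le\omega(x)^{\varepsilon/(r(r-\varepsilon))}\|g\|_{L^{r-\varepsilon}(\mathbb R^n,\omega^{\varepsilon/r})}$. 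Since $\omega(x)\ge1$ and the exponent $\varepsilon/(r(r-\varepsilon))$ is increasing in $\varepsilon$ with maximum $(r-1)/r$ on $(0,r-1]$, multiplying by $\varepsilon^{\theta}$ and taking $\sup_{\varepsilon}$ gives the stated bound. On the other hand, modulation never changes moduli, so $\|M_\xi g\|_{L_\omega^{r),\theta}}=\|g\|_{L_\omega^{r),\theta}}$ with no hypothesis on $\omega$.

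Part 1 is then immediate: for $f\in A_{q),\theta_2}^{p),\theta_1}$, the auxiliary fact applied with $(\omega,r,\theta)=(a,p,\theta_1)$ (here $a$ Beurling is used) gives $T_x f\in L_a^{p),\theta_1}$, while $\widehat{T_x f}=M_{-x}\hat f$ has the same $L_b^{q),\theta_2}$-norm as $\hat f$; hence $T_x f\in A_{q),\theta_2}^{p),\theta_1}$ and $\|T_x f\|_{A_{q),\theta_2}^{p),\theta_1}}\le a(x)^{(p-1)/p}\|f\|_{A_{q),\theta_2}^{p),\theta_1}}$. For Part 2, modulation leaves the domain factor untouched, $\|M_\xi f\|_{L_a^{p),\theta_1}}=\|f\|_{L_a^{p),\theta_1}}$, so the whole problem sits on the transform side, where $\widehat{M_\xi f}=T_\xi\hat f$ is a \emph{translate} of $\hat f$.

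This is precisely where the main obstacle lies: to keep $T_\xi\hat f$ inside $L_b^{q),\theta_2}$ I must invoke the translation-invariance estimate with $\omega=b$, which needs $b$ to be a Beurling weight. Since Part 2 is stated without any hypothesis on $b$, I would supply the symmetric assumption ``$b$ a Beurling weight'' (the mirror image of the hypothesis in Part 1); under it the argument closes at once, giving $M_\xi f\in A_{q),\theta_2}^{p),\theta_1}$ with $\|M_\xi f\|_{A_{q),\theta_2}^{p),\theta_1}}\le b(\xi)^{(q-1)/q}\|f\|_{A_{q),\theta_2}^{p),\theta_1}}$. Apart from this, nothing beyond the definition of the space and the embedding of Remark 1 is required, so once the Beurling condition on $b$ is in place both invariances follow.
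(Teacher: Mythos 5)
Your proof is correct and follows essentially the same route as the paper: submultiplicativity of the Beurling weight $a$ handles the $L_a^{p),\theta_1}$ factor, while the relations $\widehat{T_x f}=M_{-x}\hat f$ and $\widehat{M_\xi f}=T_\xi\hat f$ reduce the transform side to a modulus-preserving operation (Part 1) or to a translation in $L_b^{q),\theta_2}$ (Part 2). Your observation that Part 2 silently needs a hypothesis on $b$ is accurate --- the paper's proof simply asserts that $L_b^{q),\theta_2}\left(\mathbb R^n\right)$ is translation invariant, which for a general weight is precisely the Beurling-type condition you supply --- and your explicit bounds $\left\Vert T_x f\right\Vert \leq a(x)^{(p-1)/p}\left\Vert f\right\Vert$ and $\left\Vert M_\xi f\right\Vert \leq b(\xi)^{(q-1)/q}\left\Vert f\right\Vert$ make quantitative what the paper leaves as a ``simple calculation.''
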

\begin{proof}
Since $a(x)$ is submultiplicative and for any  $f\in A_{q),\theta_2}^{p),\theta_1}\left(\mathbb R^{n}\right) $  and $y_0\in\mathbb R^{n},$  we have $\widehat {T_{y_0} f}=e^{i <y_0,x>}\widehat f(x),$  a simple calculation shows that $ A_{q),\theta_2}^{p),\theta_1}\left(\mathbb R^{n}\right) $ is translation invariant.
Also since  $\widehat M_{\xi} f= T_{\xi} \hat f$  and $L_b^{q),\theta_2 }\left( \mathbb R^n\right) $ is translation invariant, it is easy to see that $A_{q),\theta_2}^{p),\theta_1}\left(\mathbb R^{n}\right)$ is modulation invariant.
\end{proof}

\begin{remark}
We denote by $\overline{C_0^{\infty}}(\mathbb R^{n})|_{L_a^{p),\theta }\left(\mathbb R^{n}\right) }$ and $\overline{C_0^{\infty}}( \mathbb R^{n})|_{A_{q),\theta _{2}}^{p),\theta _{1}}\left(\mathbb R^{n}\right) },$  the closure of the set ${C_0^{\infty}}(\mathbb R^{n})$ in the space $ {L_a^{p),\theta }\left(\mathbb R^{n}\right) }$ and   $A_{q),\theta_2}^{p),\theta_1}\left(\mathbb R^{n}\right)$ respectively. If $a\in L^1( R^{n}),$ and $b\in L^1(\hat R^{n}),$ we know by Example 1 and Example 2 that $\mathcal{S}\left(  \mathbb R^{n}\right)  \subset A_{q),\theta_2}^{p),\theta_1}\left(\mathbb R^{n}\right)$ and $ C^{\infty}_0(\mathbb R^{n})\subset \mathcal{S}\left( \mathbb R^{n}\right)  \subset A_{q),\theta_2}^{p),\theta_1}\left( \mathbb R^{n}\right).$ 
\end{remark}

\begin{theorem}
Let $1< p,q< \infty$ and $a(x),b(x)\in L^1(\mathbb R^{n}).$ Then the closure set   $\overline{C_0^{\infty}}( \mathbb R^{n})|_{A_{q),\theta_2}^{p),\theta_1}\left( \mathbb R^{n}\right) }$  of the set ${C_0^{\infty}}(\Omega)$ in the space  $ A_{q),\theta_2}^{p),\theta_1}\left( \mathbb R^{n}\right)$ consists of $ f\in  A_{q),\theta_2}^{p),\theta_1}\left( \mathbb R^{n}\right), $ such that 
\begin{align}
lim_{\varepsilon\to 0} \varepsilon^{\theta_1} \int\limits_{\mathbb R^{n}}\left\vert f\left(t\right)\right\vert ^{p-\varepsilon } a^{\frac{\varepsilon}p}(t)dt =0. 
\end{align} 
and 
\begin{align}
lim_{\varepsilon\to 0} \varepsilon^{\theta_2} \int\limits_{\mathbb R^{n}}\left\vert\hat f\left(t\right)\right\vert ^{p-\varepsilon } b^{\frac{\varepsilon}p}(t)dt =0. 
\end{align}
\end{theorem}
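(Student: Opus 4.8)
The plan is to split the statement into the two coordinate spaces and then to recombine the approximations. Since the norm of $A_{q),\theta_2}^{p),\theta_1}(\mathbb R^n)$ is $\|f\|_{L_a^{p),\theta_1}}+\|\hat f\|_{L_b^{q),\theta_2}}$, a sequence $\phi_n$ converges to $f$ in $A_{q),\theta_2}^{p),\theta_1}$ if and only if $\phi_n\to f$ in $L_a^{p),\theta_1}(\mathbb R^n)$ and, at the same time, $\hat\phi_n\to\hat f$ in $L_b^{q),\theta_2}(\widehat{\mathbb R^n})$. The first ingredient I would record is the generalized-grand analogue of the Greco--Iwaniec--Sbordone description quoted in Section~2: established as in \cite{gis}, \cite{su3}, the closure of $C_0^\infty$ in $L_a^{p),\theta}$ is precisely the set of $g$ with $\lim_{\varepsilon\to0}\varepsilon^{\theta}\|g\|_{L^{p-\varepsilon}(\mathbb R^n,a^{\varepsilon/p})}=0$, that is, the vanishing conditions (3.11) and (3.12). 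In particular this set is closed, and since for a Schwartz function the $L^{p-\varepsilon}$-norms stay bounded while $\varepsilon^{\theta}\to0$ (here $\theta>0$), every element of $\mathcal S$ belongs to it.

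For the inclusion ``$\subseteq$'' I would take $f$ in the $A$-closure and $\phi_n\in C_0^\infty$ with $\phi_n\to f$ in $A_{q),\theta_2}^{p),\theta_1}$. Then $\phi_n\to f$ in $L_a^{p),\theta_1}$ places $f$ in the $C_0^\infty$-closure there, which is (3.11); and $\hat\phi_n\to\hat f$ in $L_b^{q),\theta_2}$, with each $\hat\phi_n\in\mathcal S$ already in the (closed) vanishing set, places $\hat f$ in that set, which is (3.12).

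The substantive direction is ``$\supseteq$'': given $f\in A_{q),\theta_2}^{p),\theta_1}$ satisfying (3.11) and (3.12), I must exhibit \emph{one} sequence $\phi_n\in C_0^\infty$ converging to $f$ on the physical side and whose transforms converge to $\hat f$ on the Fourier side simultaneously. My construction would be the classical mollify-and-truncate family $\phi_{\delta,R}=\eta_R\,(f*\psi_\delta)$, where $\psi_\delta$ is a smooth approximate identity and $\eta_R(x)=\eta(x/R)$ a smooth cut-off equal to $1$ near the origin, so that $\phi_{\delta,R}\in C_0^\infty$. On the physical side $f*\psi_\delta\to f$ and $\eta_R g\to g$ in each $L^{p-\varepsilon}(\mathbb R^n,a^{\varepsilon/p})$; on the Fourier side the two operations interchange, since $\widehat{\phi_{\delta,R}}=(\hat f\,\widehat{\psi_\delta})*\widehat{\eta_R}$ with $\widehat{\psi_\delta}(\xi)=\hat\psi(\delta\xi)\to1$ and $\widehat{\eta_R}$ again an approximate identity. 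Letting $R\to\infty$ first and then $\delta\to0$, and passing to a diagonal sequence, would give both convergences at once.

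The hard part is that all these limits must be taken in the grand norm, i.e. in $\sup_{0<\varepsilon\le p-1}\varepsilon^{\theta}\|\cdot\|_{L^{p-\varepsilon}}$, whereas mollification and dominated convergence only produce convergence in each fixed $L^{p-\varepsilon}$. The supremum may be governed by values of $\varepsilon$ arbitrarily close to $0$, where the convergence fails to be uniform, and this is exactly where (3.11) and (3.12) are used. I would split the supremum at a threshold $\varepsilon_0$: for $\varepsilon\ge\varepsilon_0$ the uniform-in-$\varepsilon$ mollification estimates give the convergence, while for $0<\varepsilon<\varepsilon_0$ the vanishing hypotheses bound $\varepsilon^{\theta}\|f-\phi_{\delta,R}\|_{L^{p-\varepsilon}}$ by a quantity tending to $0$ with $\varepsilon_0$ uniformly in $\delta,R$, using that mollification and multiplication by a bounded cut-off raise the relevant norms by at most a fixed constant. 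Carrying out this small-$\varepsilon$ control on the physical and the Fourier side together is the principal obstacle, and it is precisely why the two separate vanishing conditions, and not mere membership in $A_{q),\theta_2}^{p),\theta_1}$, are required.
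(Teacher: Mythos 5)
Your first inclusion ($\subseteq$) is correct, and it is in substance the paper's \emph{entire} proof: the paper takes $f$ in the closure with an approximating sequence $(f_n)\subset C_0^{\infty}(\mathbb R^n)$, obtains the physical-side vanishing condition from the characterization of $\overline{C_0^{\infty}}(\mathbb R^n)|_{L_a^{p),\theta_1}}$ in \cite{su3}, and then proves the Fourier-side condition by exactly your ``closed set containing $\mathcal{F}(C_0^{\infty})$'' argument, only written out by hand: it fixes $n_0$ with $\|\hat f_{n_0}-\hat f\|_{L_b^{q),\theta_2}}<\delta/2$, shows by H\"{o}lder's inequality (using $b\in L^1$ and $\hat f_{n_0}\in L^q$) that $\varepsilon^{\theta_2}\|\hat f_{n_0}\|_{L^{q-\varepsilon}(\mathbb R^n,\,b^{\varepsilon/q})}\to 0$ as $\varepsilon \to 0$, and concludes with the triangle inequality. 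On this direction you and the paper agree, and your packaging is the cleaner of the two.

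The issue is your direction ($\supseteq$), and here the comparison is instructive: the paper never proves it at all --- its proof stops after the forward inclusion, although the theorem asserts a set equality. You are right that this is the substantive direction, and right that membership of $f$ and $\hat f$ in the two coordinate closures does not by itself produce a single sequence in $C_0^{\infty}$ converging simultaneously in both norms. But your mollify-and-truncate programme does not close the gap under the stated hypotheses. It requires the maps $g\mapsto g*\psi_\delta$ and $\hat g\mapsto \hat g*\widehat{\eta_R}$ to be bounded on $L^{p-\varepsilon}(\mathbb R^n, a^{\varepsilon/p})$ and $L^{q-\varepsilon}(\mathbb R^n, b^{\varepsilon/q})$ uniformly in $\varepsilon$, and it requires $g*\psi_\delta\to g$ in those norms; both rest on boundedness and strong continuity of translation in these weighted spaces. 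For a weight that is merely positive and integrable --- the only assumption in this theorem --- translation can fail to be bounded: on $\mathbb R$ take $a=\sum_k c_k\chi_{[k,k+1)}$ with $\sum_k c_k<\infty$ but $c_{k+1}/c_k$ unbounded; then the translation $T_1$ is unbounded on $L^{r}(\mathbb R, a^{\varepsilon/p})$ for every fixed $\varepsilon>0$. This is precisely why the paper imposes Beurling (submultiplicative) weights in its completeness theorem and in the module theorem of Section 5, an assumption absent from the present statement. Consequently your key claim --- that mollification and cut-off ``raise the relevant norms by at most a fixed constant,'' which your small-$\varepsilon$ splitting needs uniformly in $\delta$ and $R$ --- is unjustified (true for the cut-off, not for the convolution). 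As written, your proposal establishes the same inclusion the paper establishes; the reverse inclusion remains unproved both in your argument and in the paper, and repairing it would require either an added moderateness/submultiplicativity hypothesis on $a$ and $b$, or a construction that never translates against the weight.
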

\begin{proof}
 Let  $f\in\overline{C_0^{\infty}}(\mathbb R^{n})|_ { A_{q),\theta_2}^{p),\theta_1}}.$  Then there exists a sequence $\left( f_{n}\right)$ in  $ C_0^{\infty}(\mathbb R^{n})$ such that 
\begin{equation*}
\|f_n-f\|_{  A_{q),\theta_2}^{p),\theta_1}}\rightarrow 0.
\end{equation*}
This implies  $\left( f_{n}\right)$ converges to $f$ in $ {L_a^{p),\theta_1 }\left(\mathbb R^{n}\right) }$ so $f\in \overline{C_0^{\infty}}(\mathbb R^{n})|_{L_a^{p),\theta }\left(\mathbb R^{n}\right) }, $ and  $\left(\hat f_{n}\right)$ converges to $\hat f$ in $ {L_b^{q),\theta_2 }\left(\mathbb R^{n}\right) }.$ Since $f\in \overline{C_0^{\infty}}(\mathbb R^{n})|_{L_a^{p),\theta }\left(\mathbb R^{n}\right) }, $ we have
\begin{align}
lim_{\varepsilon\to 0} \varepsilon^{\theta} \int\limits_{\mathbb R^{n}}\left\vert f\left(t\right)\right\vert ^{p-\varepsilon } a^{\frac{\varepsilon}p}(t)dt =0. 
\end{align} 
(see \cite{su3}).

Since $\left(\hat f_{n}\right)$ converges to $\hat f$ in $\mathcal {F}( C_0^{\infty}(\mathbb R^n))\subset{L_b^{q),\theta_2 }\left(\mathbb R^{n}\right) }$, for given $\delta >0,$ there exists $n_{0}\in\mathbb{N} $ such that 
\begin{equation}
\|\hat f_{n_0}-\hat f\|_{{L_a^{q),\theta }\left(\mathbb R^{n}\right) }}<\frac{\delta 
}{2}.  
\end{equation}
for all $n>n_0.$ 
%\begin{align}
%\int\limits_{ R^{n} }\left\vert f_{n_{0}}\left(t\right) \right\vert ^{p-\varepsilon } a(t)^{\frac{\varepsilon}{p}}dt<\frac{\delta 
%}{2}.  
%\end{align}
%for all $n>n_0$  and for $0<\varepsilon\leq p-1.$  
Take  the element $\hat f_{n_0}\in\mathcal {F}( C_0^{\infty}(\mathbb R^n))\subset\mathcal{S}(\mathbb R^n)).$
Let $r=\frac{q}{q-\varepsilon},$ and $ r'=\frac{q}{\varepsilon}.$ Since and $\frac{1}{r}+\frac{1}{r'}=1.$ Since
\begin{align*}
  \int\limits_{ \mathbb R^{n} }(\left\vert\hat f_{n_{0}}\left(t\right) \right\vert ^{q-\varepsilon })^{\frac{q}{q-\varepsilon }}dt = \int\limits_{\mathbb R^{n} }\left\vert\hat f_{n_{0}}\left(t\right) \right\vert ^{q}dt<\infty, 
\end{align*}
and 
\begin{align*}
 \int\limits_{\mathbb R^{n} }\left\vert b^{\frac{\varepsilon}q}(t)\right\vert^{\frac{q}{\varepsilon }}dt=  \int\limits_{\mathbb R^{n} }\left\vert b\right\vert dt=\|b(t)\|_{L^1}<\infty,
\end{align*}
 by the H\"{o}lder's inequality
\begin{align}
\int\limits_{\mathbb R^{n} }\left\vert\hat f_{n_{0}}\left(t\right) \right\vert ^{q-\varepsilon } b(t)^{\frac{\varepsilon}{q}}dt\leq&\| |\hat f_{n_{0}}|^{q-\varepsilon}\|_{L^{\frac{q}{q-\varepsilon}}}.\|b^{\frac{\varepsilon}{b}}\|_{L^{\frac{p}{\varepsilon}}}=(\|f_{n_0}\|_{L^q})^{q-\varepsilon}(\|a\|_{L^1})^{\frac{\varepsilon}{q}}.
\end{align}
 Since $b\in L^1 (\mathbb R^n)$ and $f_{n_0} \in  L^q (\mathbb R^n),$   from  (3.18) we write
 \begin{align}
\varepsilon^{\theta_2}( \int\limits_{\mathbb R^{n} }\left\vert\hat f_{n_{0}}\left(t\right) \right\vert ^{q-\varepsilon_2 } b(t)^{\frac{\varepsilon}{q}}dt)^{\frac{1}{q-\varepsilon}}\leq&\varepsilon^{\theta_2}((\|\hat f_{n_0}\|_{L^q})^{q-\varepsilon}(\|b\|_{L^1})^{\frac{\varepsilon}{q}})^{\frac{1}{q-\varepsilon}}.
 \end{align}
 This implies,
\begin{align}
\varepsilon^{\theta_2}(\int\limits_{\mathbb R^{n} }\left\vert \hat f_{n_{0}}\left(t\right)\right\vert ^{q-\varepsilon } a^{\frac{\varepsilon}q}(t)dt)^{\frac{1}{q-\varepsilon}} \rightarrow 0
\end{align}
as $\varepsilon$ tends to zero. Thus  there exists $\varepsilon _0>0$ such that when $\varepsilon <\varepsilon_0,$ we have 
\begin{align}
\varepsilon^{\theta_2}( \int\limits_{\mathbb R^{n}}\left\vert\hat f_{n_{0}}\left(t\right)\right\vert ^{q-\varepsilon } b^{\frac{\varepsilon}q}(t)dt)^{\frac{1}{q-\varepsilon}} &<\frac{\delta}{2}.
\end{align}
 From (3.17) and (3.22) we obtain 
  \begin{align*}
    \varepsilon^{\theta_2}( \int\limits_{ \mathbb R^{n}}\left\vert\hat f\left(t\right)\right\vert ^{q-\varepsilon } b^{\frac{\varepsilon}q}(t)dt)^{\frac{1}{q-\varepsilon}}&\leq \varepsilon^{\theta_2}( \int\limits_{\mathbb R^{n}}\left\vert\hat f\left(t\right)-\hat f_{n_0}(t)\right\vert ^{q-\varepsilon } b^{\frac{\varepsilon}q}(t)dt)^{\frac{1}{q-\varepsilon}}\\+ \varepsilon^{\theta_2}( \int\limits_{\mathbb R^{n}}\left\vert\hat f_{n_0}\left(t\right)\right\vert ^{q-\varepsilon } b^{\frac{\varepsilon}q}(t)dt)^{\frac{1}{q-\varepsilon}}&<\|f-\hat f_{n_0}\|_{{L_a^{q),\theta_2 }\left(\mathbb R^{n}\right) }}+\frac{\delta}2<%\frac{\delta}2+\frac{\delta}2=
\delta 
   \end{align*}
   when  $\varepsilon<\varepsilon_0.$ This completes the proof.
   \end{proof}
     
%\begin{corollary}
%The closure $\ \overline{C_{0}^{\infty }\left( T\right) }\mid _{A_{q),\theta
%_{2}}^{p),\theta _{1}}}$of $\ C_{0}^{\infty }\left( T\right) $ in $
%A_{q),\theta _{2}}^{p),\theta _{1}}\left( T\right) $ consists of functions $
%f\in A_{q),\theta _{2}}^{p),\theta _{1}}\left( T\right) $ such that
%\begin{align}
%lim_{\varepsilon\to 0} \varepsilon^{\theta_1} \int\limits_{ R^{n}}\left\vert f\left(t\right)\right\vert ^{p-\varepsilon } dt =0. 
%\end{align}
%and 
%\begin{align}%
%lim_{\varepsilon\to 0} \varepsilon^{\theta_2} \int\limits_{ R^{n}}\left\vert f\left(t\right)\right\vert ^{p-\varepsilon } dt =0. 
%\end{align}

%\end{corollary}

\section{Inclusions for the space $ A_{q),\theta_2 }^{p),\theta _{1}}\left(\mathbb R^{n},a, b\right) $}

%\section{Inclısions and consequences} 
\begin{proposition}
 Let $a_1(x),a_2(x),b_1(x),b_2(x)$ be a Beurling's weight functions. Then  $$ A_{q),\theta }^{p),\theta }\left(\mathbb R^{n},a_1, b_1\right)\subset  A_{q),\theta }^{p),\theta }\left(\mathbb R^{n},a_2, b_2\right) $$ if and only if there exists $C>0$ such that
\begin{align}
\|f\|_{ A_{q),\theta }^{p),\theta }\left(\mathbb R^{n},a_2, b_2\right)}\leq C\|f\|_{ A_{q),\theta }^{p),\theta }\left(\mathbb R^{n},a_1, b_1\right).}
\end{align}
\end{proposition}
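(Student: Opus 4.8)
The plan is to prove the two implications separately, with the forward (``if'') direction being immediate and the reverse (``only if'') direction resting on the closed graph theorem. The ``if'' direction is routine: if estimate $(4.1)$ holds, then for any $f\in A_{q),\theta}^{p),\theta}\left(\mathbb R^{n},a_1,b_1\right)$ we have $\|f\|_{A_{q),\theta}^{p),\theta}\left(\mathbb R^{n},a_2,b_2\right)}\leq C\|f\|_{A_{q),\theta}^{p),\theta}\left(\mathbb R^{n},a_1,b_1\right)}<\infty$, so $f\in A_{q),\theta}^{p),\theta}\left(\mathbb R^{n},a_2,b_2\right)$, and the set inclusion follows.

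For the ``only if'' direction I would assume the set-theoretic inclusion and consider the inclusion map $\iota\colon A_{q),\theta}^{p),\theta}\left(\mathbb R^{n},a_1,b_1\right)\to A_{q),\theta}^{p),\theta}\left(\mathbb R^{n},a_2,b_2\right)$ defined by $\iota(f)=f$, which is well defined and linear by hypothesis. Since $a_1,a_2,b_1,b_2$ are Beurling's weights, Theorem 2 guarantees that both spaces are Banach spaces, so to obtain the bound on $\iota$ — which is precisely $(4.1)$ — it suffices to show that $\iota$ has closed graph and then invoke the closed graph theorem. The key auxiliary tool is property $6$ of Theorem 1: because a Beurling weight satisfies $a_i(x)\geq 1$, the local estimate $\int_{E}|h|\,dx\leq C_0(p,E)\,\|h\|_{A_{q),\theta}^{p),\theta}}$ is valid in \emph{both} spaces for every compact $E\subset\mathbb R^{n}$.

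To verify closedness of the graph, suppose $f_n\to f$ in $A_{q),\theta}^{p),\theta}\left(\mathbb R^{n},a_1,b_1\right)$ and simultaneously $f_n\to g$ in $A_{q),\theta}^{p),\theta}\left(\mathbb R^{n},a_2,b_2\right)$; I must show $f=g$ a.e. Applying property $6$ to $f_n-f$ in the first space and to $f_n-g$ in the second gives $\int_E|f_n-f|\,dx\to 0$ and $\int_E|f_n-g|\,dx\to 0$ for every compact $E$, that is, $f_n\to f$ and $f_n\to g$ in $L^1_{\mathrm{loc}}(\mathbb R^{n})$. Passing to a subsequence converging almost everywhere then forces $f=g$ a.e., so the graph of $\iota$ is closed. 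The closed graph theorem yields a constant $C>0$ with $\|f\|_{A_{q),\theta}^{p),\theta}\left(\mathbb R^{n},a_2,b_2\right)}\leq C\|f\|_{A_{q),\theta}^{p),\theta}\left(\mathbb R^{n},a_1,b_1\right)}$, which is $(4.1)$. I expect the graph-closedness step to be the main obstacle: convergence in the grand-type norms is a priori too weak to compare limits directly, and the only bridge to an almost-everywhere identification is the local $L^1$ control from property $6$, which is exactly why the Beurling hypothesis $a_i\geq 1$ is needed.
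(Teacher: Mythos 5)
Your proposal is correct and takes essentially the same route as the paper: both reduce the inequality $(4.1)$ to the closed graph theorem applied to the identity/inclusion map between the two Banach spaces (the paper packages this as completeness of an auxiliary sum norm $\||\cdot\||$ followed by equivalence of norms, which is the same argument). In fact your verification that the two limits coincide — via the local $L^{1}$ estimate of property $6$ of Theorem 1, legitimate here because Beurling weights satisfy $a_i(x)\geq 1$ — supplies exactly the step the paper's proof glosses over with ``it is easy to show that $f=g$,'' so your write-up is, if anything, the more complete one.
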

\begin{proof}
\end{proof}Suppose  $$ A_{q),\theta }^{p),\theta }\left(\mathbb R^{n},a_1, b_1\right)\subset  A_{q),\theta }^{p),\theta }\left(\mathbb R^{n},a_2, b_2\right). $$ Define the sum norm 
  \begin{align}
\||f|||=\|f\|_{ A_{q),\theta }^{p),\theta }\left(\mathbb R^{n},a_2, b_2\right)}+\|f\|_{ A_{q),\theta }^{p),\theta }\left(\mathbb R^{n},a_1, b_1\right).}
  \end{align}
 on $A_{q),\theta }^{p),\theta }\left(\mathbb R^{n},a_1, b_1\right).$ Let $\left( f_{n}\right) _{n\in\mathbb{N}}$ be a Cauchy sequence in $(A_{q),\theta }^{p),\theta }\left(\mathbb R^{n},a_1, b_1\right),\||.\||).$  Then $\left( f_{n}\right) _{n\in \mathbb{N}}$ is a Cauchy sequence in  $A_{q),\theta }^{p),\theta }\left(\mathbb R^{n},a_1, b_1\right)$ and $(\hat{f}_{n})_{n\in \mathbb{N}}$ is a Cauchy sequence in $ A_{q),\theta }^{p),\theta }\left(\mathbb R^{n},a_2, b_2\right).$ Hence $\left( f_{n}\right) _{n\in \mathbb{N}}$ coverges to a function $f$ in $A_{q),\theta }^{p),\theta }\left(\mathbb R^{n},a_1, b_1\right)$ and to a function $g$ in  $ A_{q),\theta }^{p),\theta }\left(\mathbb R^{n},a_2, b_2\right).$ It is easy to show that $f=g,$ and so $(A_{q),\theta }^{p),\theta }\left(\mathbb R^{n},a_1, b_1\right),\||.\||)$ is complete. This shows that the original norm of $A_{q),\theta }^{p),\theta }\left(\mathbb R^{n},a_1, b_1\right)$ and $\||.\||$ are equivalent. Then there exist $C_1>0, C_2>0$ such that 
\begin{align}
C_2\|f\|_{ A_{q),\theta }^{p),\theta }\left(\mathbb R^{n},a_2, b_2\right)}\leq\||f|||\leq C_1\|f\|_{ A_{q),\theta }^{p),\theta }\left(\mathbb R^{n},a_1, b_1\right).}
\end{align}
Hence if one uses the closed graph theorem obtains 
\begin{align}
\|f\|_{ A_{q),\theta }^{p),\theta }\left(\mathbb R^{n},a_2, b_2\right)}\leq C\|f\|_{ A_{q),\theta }^{p),\theta }\left(\mathbb R^{n},a_1, b_1\right),}
\end{align}
where $C=max\{C_1,C_2\}.$
To prove of the other direction is easy.

It is easy to prove the following Proposition.
\begin{proposition}

$1)$ If   $\theta _{1}<\theta _{3},$  and  $\theta _{2}<\theta _{4},$ then we have the inclusion
\begin{equation*}
A_{q),\theta_2 }^{p),\theta _{1}}\left( \mathbb R^{n},a\right) \subset A_{q),\theta_4
}^{p),\theta _{3}}\left(\mathbb R^{n},a\right).
\end{equation*}

$2)$ Let  $w_1(x)=a_1(x)^{\frac{\varepsilon}{p}}, \ w_2(x)=b_1(x)^{\frac{\eta}{q}},w_{3}(x)=a_2(x)^{\frac{\varepsilon}{p}}$ and $w_4(x)=b_2(x)^{\frac{\eta}{q}},$ for $0<\varepsilon\leq p-1, 0<\eta\leq q-1.$ If $w_1(x)\prec w_3(x),$ and  $w_2(x)\prec w_4(x),$ then
\begin{equation*}
A_{q),\theta}^{p),\theta}\left(\mathbb R^{n},a_2,b_2\right)  \subset A_{q),\theta
}^{p),\theta }\left( \mathbb R^{n},a_1,b_1\right).
\end{equation*}

\end{proposition}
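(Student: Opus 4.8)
The plan is to deduce both inclusions from the additive structure of the norm, $\|f\|_{A_{q),\theta_2}^{p),\theta_1}}=\|f\|_{L_a^{p),\theta_1}}+\|\hat f\|_{L_b^{q),\theta_2}}$, by establishing in each case a bound of the target norm by a constant multiple of the source norm; such a bound forces the set inclusion at once, and for part 2 it is precisely the criterion of the preceding Proposition. The whole argument thus reduces to two independent comparisons of generalized grand Lebesgue norms, one for $f$ itself and one for its Fourier transform $\hat f$.

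For part 1 I would work with the defining norm of the generalized grand Lebesgue space, in which the parameter $\theta$ enters only through the scalar factor $\varepsilon^{\theta}$. Fixing $f$ and $0<\varepsilon\le p-1$ and writing $\varepsilon^{\theta_3}=\varepsilon^{\theta_3-\theta_1}\varepsilon^{\theta_1}$, the hypothesis $\theta_3>\theta_1$ together with $\varepsilon\le p-1$ gives $\varepsilon^{\theta_3-\theta_1}\le(p-1)^{\theta_3-\theta_1}$, whence
\begin{equation*}
\varepsilon^{\theta_3}\|f\|_{L^{p-\varepsilon}(\mathbb R^{n},a^{\varepsilon/p})}\le(p-1)^{\theta_3-\theta_1}\,\varepsilon^{\theta_1}\|f\|_{L^{p-\varepsilon}(\mathbb R^{n},a^{\varepsilon/p})}.
\end{equation*}
Taking the supremum over $\varepsilon$ yields $\|f\|_{L_a^{p),\theta_3}}\le(p-1)^{\theta_3-\theta_1}\|f\|_{L_a^{p),\theta_1}}$, and the identical computation with $q,\theta_2,\theta_4$ gives $\|\hat f\|_{L_a^{q),\theta_4}}\le(q-1)^{\theta_4-\theta_2}\|\hat f\|_{L_a^{q),\theta_2}}$. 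Adding the two estimates produces $\|f\|_{A_{q),\theta_4}^{p),\theta_3}}\le C\|f\|_{A_{q),\theta_2}^{p),\theta_1}}$ with $C=\max\{(p-1)^{\theta_3-\theta_1},(q-1)^{\theta_4-\theta_2}\}$, which is the asserted inclusion.

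For part 2 the hypotheses $w_1\prec w_3$ and $w_2\prec w_4$ unwind to the pointwise weight bounds $a_1^{\varepsilon/p}(x)\le C\,a_2^{\varepsilon/p}(x)$ and $b_1^{\eta/q}(x)\le C'\,b_2^{\eta/q}(x)$, and after replacing $C,C'$ by their maximum with $1$ we may assume $C,C'\ge1$. For fixed $\varepsilon$, monotonicity of the integral gives
\begin{equation*}
\int_{\mathbb R^{n}}|f|^{p-\varepsilon}a_1^{\varepsilon/p}\,dx\le C\int_{\mathbb R^{n}}|f|^{p-\varepsilon}a_2^{\varepsilon/p}\,dx,
\end{equation*}
and raising to the power $1/(p-\varepsilon)$ contributes only the factor $C^{1/(p-\varepsilon)}\le C$, since $p-\varepsilon\ge1$. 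Multiplying by $\varepsilon^{\theta}$ and taking the supremum gives $\|f\|_{L_{a_1}^{p),\theta}}\le C\|f\|_{L_{a_2}^{p),\theta}}$, and likewise $\|\hat f\|_{L_{b_1}^{q),\theta}}\le C'\|\hat f\|_{L_{b_2}^{q),\theta}}$. Summing and invoking the preceding Proposition then yields $A_{q),\theta}^{p),\theta}(\mathbb R^{n},a_2,b_2)\subset A_{q),\theta}^{p),\theta}(\mathbb R^{n},a_1,b_1)$.

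The one point requiring care in either part is the uniform control, over the whole range $0<\varepsilon\le p-1$, of an $\varepsilon$-dependent scalar: in part 1 the increasing factor $\varepsilon^{\theta_3-\theta_1}$ is bounded by its value at the right endpoint $\varepsilon=p-1$, and in part 2 the constant $C^{1/(p-\varepsilon)}$ is controlled using $p-\varepsilon\ge1$ (which is exactly why normalizing to $C\ge1$ is convenient). Everything else follows immediately from monotonicity of the integral and the additive definition of the $A$-norm.
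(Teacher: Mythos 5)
Your proof is correct. The paper in fact supplies no argument for this statement at all---it merely remarks ``It is easy to prove the following Proposition''---and your direct norm comparison (bounding $\varepsilon^{\theta_3}\le(p-1)^{\theta_3-\theta_1}\varepsilon^{\theta_1}$ on each term for part 1, and pushing the pointwise weight bound through the integral with the normalizations $C\ge1$ and $p-\varepsilon\ge1$ for part 2, then invoking the easy direction of the preceding Proposition) is exactly the routine argument the authors intend, so there is nothing to contrast. One point worth making explicit in part 2: since the pair $w_1=a_1^{\varepsilon/p}$, $w_3=a_2^{\varepsilon/p}$ changes with $\varepsilon$, the constant in $w_1\prec w_3$ could a priori depend on $\varepsilon$; this is harmless, because a bound $a_1^{\varepsilon_0/p}\le C_0\,a_2^{\varepsilon_0/p}$ for a single $\varepsilon_0$ yields $a_1^{\varepsilon/p}\le C_0^{\varepsilon/\varepsilon_0}a_2^{\varepsilon/p}\le\max\{1,C_0\}\,a_2^{\varepsilon/p}$ for every $\varepsilon\in(0,p-1]$, so the single uniform $C$ you assume always exists.
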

By using Proposition 3, the following corollary is easily proved.
\begin{corollary}
Let $\theta_1\leq\theta_3,\ \theta_2\leq\theta_4$ and let  $w_1(x)=a_1(x)^{\frac{\varepsilon}{p}},\ w_2(x)=b_1(x)^{\frac{\eta}{q}},\ w_{3}(x)=a_2(x)^{\frac{\varepsilon}{p}} ,\ w_4(x)=b_2(x)^{\frac{\eta}{q}},$ for $0<\varepsilon\leq p-1, 0<\eta\leq q-1.$ If $w_1(x)\prec w_3(x),$ and  $w_2(x)\prec w_4(x),$ then
\begin{equation*}
A_{q),\theta_2}^{p),\theta_1}\left(\mathbb R^{n},a_2,b_2\right)  \subset A_{q),\theta_4
}^{p),\theta_3 }\left( \mathbb R^{n},a_1,b_1\right).
\end{equation*}
\end{corollary}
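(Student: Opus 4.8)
The plan is to deduce the inclusion purely by composing the two inclusions supplied by Proposition 3 and invoking transitivity of set inclusion; no genuinely new estimate is required. The intermediate space will be $A_{q),\theta_2}^{p),\theta_1}\left(\mathbb{R}^{n},a_1,b_1\right)$, which shares the exponents $(\theta_1,\theta_2)$ of the source space and the grandizers $(a_1,b_1)$ of the target space.

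First I would change the grandizers while keeping the exponents fixed. Since $w_1\prec w_3$ and $w_2\prec w_4$, part $2)$ of Proposition 3 gives
\begin{equation*}
A_{q),\theta_2}^{p),\theta_1}\left(\mathbb{R}^{n},a_2,b_2\right)\subset A_{q),\theta_2}^{p),\theta_1}\left(\mathbb{R}^{n},a_1,b_1\right).
\end{equation*}
Here one observes that although part $2)$ is stated for a single common exponent, its proof estimates the $p$-component $\|f\|_{L_{a}^{p),\theta_1}}$ and the $q$-component $\|\hat f\|_{L_{b}^{q),\theta_2}}$ separately and uses only the weight comparisons $w_1\prec w_3$ and $w_2\prec w_4$; hence it applies verbatim with the distinct exponents $\theta_1,\theta_2$.

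Second I would raise the exponents while keeping the grandizers $a_1,b_1$ fixed. Since $\theta_1\le\theta_3$ and $\theta_2\le\theta_4$, part $1)$ of Proposition 3, together with the trivial identity inclusion in any index where equality holds, yields
\begin{equation*}
A_{q),\theta_2}^{p),\theta_1}\left(\mathbb{R}^{n},a_1,b_1\right)\subset A_{q),\theta_4}^{p),\theta_3}\left(\mathbb{R}^{n},a_1,b_1\right).
\end{equation*}
Chaining the two displayed inclusions gives $A_{q),\theta_2}^{p),\theta_1}\left(\mathbb{R}^{n},a_2,b_2\right)\subset A_{q),\theta_4}^{p),\theta_3}\left(\mathbb{R}^{n},a_1,b_1\right)$, which is the assertion. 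If a quantitative embedding constant is desired, I would multiply the constant produced by the weight step, of the form $\max\{C^{1/(p-\varepsilon)}\}$ and bounded uniformly for $0<\varepsilon\le p-1$, by the factors $(p-1)^{\theta_3-\theta_1}$ and $(q-1)^{\theta_4-\theta_2}$ coming from the exponent step.

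The only point that is not purely formal is the uniformity in $\varepsilon$ (respectively $\eta$) of the weight comparison: the hypothesis is phrased through $w_1=a_1^{\varepsilon/p}\prec w_3=a_2^{\varepsilon/p}$, so to pass it through the supremum defining $\|\cdot\|_{L^{p),\theta_1}_a}$ one must check that the comparison constant in $\int|f|^{p-\varepsilon}a_1^{\varepsilon/p}\,dx\le C\int|f|^{p-\varepsilon}a_2^{\varepsilon/p}\,dx$ may be chosen independently of $\varepsilon$. This is where I expect the (small) effort to lie, since $1/(p-\varepsilon)$ ranges in a bounded interval for $0<\varepsilon\le p-1$; everything else is transitivity of inclusion.
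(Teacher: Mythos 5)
Your proof is correct and is exactly what the paper intends: the paper gives no written argument beyond the remark that the corollary ``is easily proved'' from Proposition 3, and your chain through the intermediate space $A_{q),\theta_2}^{p),\theta_1}\left(\mathbb{R}^{n},a_1,b_1\right)$ --- part 2) to change grandizers at fixed exponents, then part 1) to raise exponents at fixed grandizers --- is precisely that argument. Your supplementary observations (that both parts of Proposition 3 apply componentwise, hence with distinct exponents $\theta_1,\theta_2$ and distinct grandizers $a_1,b_1$, and that the weight-comparison constant is uniform in $\varepsilon$ because $C^{\varepsilon/(p(p-\varepsilon))}\le\max\{1,C^{(p-1)/p}\}$) only tighten the paper's sketch.
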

\ We know that when $\theta _{1}=0$, $\theta _{2}=0,$ and a=b=1 the spaces $ L_a^{p),\theta_1}(\mathbb R^{n}) $ and $ L_b^{q),\theta_2}(\widehat{\mathbb R^{n}}) $ reduce to $L^{p}\left(\mathbb  R^{n}\right) $ and $L ^{q}\left(
\widehat{\mathbb R^{n}}\right) $ respectively. Then $A_{q),\theta _{2}}^{p),\theta
_{1}}\left( \mathbb R^{n}\right) $ reduces to the space $A^{p,q}\left(\mathbb  R^{n}\right) ,$which
is defined by the norm $\left\Vert f\right\Vert _{A^{p,q}}=\left\Vert
f\right\Vert _{p}+\left\Vert \widehat{f}\right\Vert _{L ^{q}},$ ( see \cite{rgl}, \cite{wr}).
\begin{definition}
 Let's denote the space defined by the norm  
\begin{align*}
\left\Vert f\right\Vert _{ A^{p-\varepsilon , q-\eta  }(\mathbb R^n,a(x)^{\frac{\varepsilon}{p}}, b(x)^{\frac{\eta}{q}})}&=\left\Vert f\right\Vert
_{L^{p-\varepsilon }(\mathbb R^{n}, a(x)^{\frac{\varepsilon}{p}})}+\left\Vert \widehat{f}\right\Vert _{L ^{q-\eta }(\mathbb R^n, b(x)^{\frac{\eta}{q}})} 
\end{align*}
 with $ { A^{p-\varepsilon , q-\eta  }(\mathbb R^n,a(x)^{\frac{\varepsilon}{p}}, b(x)^{\frac{\eta}{q}})}.$ 
\end{definition}
\begin{theorem}
 Let $1<p,q<\infty $ and $a(x),b(x)\in L^1( R^{n}) .$ We have the inclusions
\begin{align*}
A^{p,q}\left(\mathbb R^{n}\right) \subset A_{q),\theta _{2}}^{p),\theta _{1}}\left( \mathbb R^{n}\right) \subset{ A^{p-\varepsilon , q-\eta  }(\mathbb R^n,a(x)^{\frac{\varepsilon}{p}}, b(x)^{\frac{\eta}{q}}).}
\end{align*}
%{\rm (b)}\ \  Let $1<p,q<\infty .$ Then we have the inclusion\ \
%\ \ \ \begin{align*}
%A^{p,q}\left( \mathbb T\right) \subset A_{q),\theta _{2}}^{p),\theta
%_{1}}\left( \mathbb T\right) \subset A^{p-\varepsilon ,\text{ }q-\varepsilon }\left(
 %\mathbb T\right). 
% \end{align*} 
\end{theorem}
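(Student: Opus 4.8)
The plan is to decompose the $A$-norm into its two summands---the space-side term $\|f\|_{L^{p),\theta_1}_a}$ and the frequency-side term $\|\hat f\|_{L^{q),\theta_2}_b}$---and to treat each one separately by means of the two-sided embedding $L^p\subset L^{p),\theta}_a\subset L^{p-\varepsilon}(\mathbb R^n,a^{\varepsilon/p})$ recalled in (2.10), together with the fact that the left embedding $L^p(\mathbb R^n)\subset L^{p),\theta}_a(\mathbb R^n)$ holds precisely because $a\in L^1(\mathbb R^n)$. Since the Fourier transform enters the three spaces $A^{p,q}$, $A_{q),\theta_2}^{p),\theta_1}$ and $A^{p-\varepsilon,q-\eta}$ only through the symbol $\hat f$, no mapping property of $\mathcal F$ is needed beyond its occurrence in the definitions, and the whole statement reduces to scalar embeddings between weighted Lebesgue and generalized grand Lebesgue spaces, applied once to $f$ and once to $\hat f$.

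For the left inclusion $A^{p,q}(\mathbb R^n)\subset A_{q),\theta_2}^{p),\theta_1}(\mathbb R^n)$, I would take $f\in A^{p,q}(\mathbb R^n)$, so that $f\in L^p(\mathbb R^n)$ and $\hat f\in L^q(\mathbb R^n)$. Because $a,b\in L^1(\mathbb R^n)$, the embedding $L^p\subset L^{p),\theta_1}_a$ gives $f\in L^{p),\theta_1}_a(\mathbb R^n)$ and $\hat f\in L^{q),\theta_2}_b(\mathbb R^n)$, whence $f\in A_{q),\theta_2}^{p),\theta_1}(\mathbb R^n)$. To make the norm estimate explicit I would apply H\"{o}lder's inequality with exponents $\frac{p}{p-\varepsilon}$ and $\frac{p}{\varepsilon}$, obtaining
\begin{equation*}
\int_{\mathbb R^n}|f|^{p-\varepsilon}a^{\frac{\varepsilon}{p}}\,dx\le \|f\|_{L^p}^{\,p-\varepsilon}\,\|a\|_{L^1}^{\frac{\varepsilon}{p}},
\end{equation*}
so that $\varepsilon^{\theta_1}\|f\|_{L^{p-\varepsilon}(\mathbb R^n,a^{\varepsilon/p})}\le \varepsilon^{\theta_1}\|a\|_{L^1}^{\varepsilon/(p(p-\varepsilon))}\|f\|_{L^p}$; taking the supremum over $0<\varepsilon\le p-1$ yields $\|f\|_{L^{p),\theta_1}_a}\le C_a\|f\|_{L^p}$ with $C_a=\sup_{0<\varepsilon\le p-1}\varepsilon^{\theta_1}\|a\|_{L^1}^{\varepsilon/(p(p-\varepsilon))}<\infty$. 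The identical computation for $\hat f$ gives $\|\hat f\|_{L^{q),\theta_2}_b}\le C_b\|\hat f\|_{L^q}$, and adding the two bounds proves both the inclusion and the accompanying norm inequality.

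For the right inclusion $A_{q),\theta_2}^{p),\theta_1}(\mathbb R^n)\subset A^{p-\varepsilon,q-\eta}(\mathbb R^n,a^{\varepsilon/p},b^{\eta/q})$, with $\varepsilon$ and $\eta$ now fixed, I would invoke the right-hand half of (2.10) directly: for each fixed $0<\varepsilon\le p-1$ the definition of the grand norm gives $\varepsilon^{\theta_1}\|f\|_{L^{p-\varepsilon}(\mathbb R^n,a^{\varepsilon/p})}\le\|f\|_{L^{p),\theta_1}_a}$, that is $\|f\|_{L^{p-\varepsilon}(\mathbb R^n,a^{\varepsilon/p})}\le\varepsilon^{-\theta_1}\|f\|_{L^{p),\theta_1}_a}$, and likewise $\|\hat f\|_{L^{q-\eta}(\mathbb R^n,b^{\eta/q})}\le\eta^{-\theta_2}\|\hat f\|_{L^{q),\theta_2}_b}$. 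Summing these two estimates shows $f\in A^{p-\varepsilon,q-\eta}(\mathbb R^n,a^{\varepsilon/p},b^{\eta/q})$ with the corresponding norm control.

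I do not expect a genuine obstacle: both inclusions are immediate consequences of the chain $L^p\subset L^{p),\theta}_a\subset L^{p-\varepsilon}(\mathbb R^n,a^{\varepsilon/p})$ applied separately to $f$ and to $\hat f$. The only point that requires attention is the finiteness of the constant $C_a$ (and $C_b$) in the left inclusion, which is exactly where the hypothesis $a,b\in L^1(\mathbb R^n)$ is used; since both $\varepsilon^{\theta_1}$ and $\|a\|_{L^1}^{\varepsilon/(p(p-\varepsilon))}$ stay bounded on $(0,p-1]$, this supremum is finite and the argument closes.
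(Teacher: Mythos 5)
Your proof is correct and follows essentially the same route as the paper: both decompose the $A$-norm into the terms $\|f\|_{L^{p),\theta_1}_a}$ and $\|\hat f\|_{L^{q),\theta_2}_b}$ and apply the chain $L^p\subset L^{p),\theta}_a\subset L^{p-\varepsilon}(\mathbb R^n,a^{\varepsilon/p})$ once to $f$ and once to $\hat f$, summing the resulting norm estimates. The only difference is cosmetic: the paper simply cites the existence of the embedding constants (its (2.9) and (2.12), from Samko--Umarkhadzhiev), whereas you re-derive them explicitly via H\"older's inequality and the definition of the supremum, which is exactly how those cited embeddings are proved.
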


\begin{proof}
{\rm (a)} Let $f\in A_{q),\theta _{2}}^{p),\theta _{1}}\left(\mathbb R^{n}\right) .$ Then $f\in
L^{p),\theta _{1}}_a\left(\mathbb R^{n}\right) $ and $\ \widehat{f}\in L ^{q),\theta
_{2}}_b(\widehat{\mathbb R^{n}}).$ Since by (2.12),  $L^{p),\theta _{1}}_a\left(\mathbb R^{n}\right)
\subset L^{p-\varepsilon }\left(\mathbb  R^{n},a(x)^{\frac{\varepsilon}{p}}\right) $ and $L ^{q),\theta _{2}}(
\widehat{\mathbb R^{n}})\subset L^{q-\eta }(\widehat{\mathbb R^{n}},b_1(x)^{\frac{\eta}{q}}),$ there
exists constants $C_{1}>0$ and $C_{2}>0$ such that $\left\Vert f\right\Vert
_{L^{p-\varepsilon }(\mathbb R^{n}, a(x)^{\frac{\varepsilon}{p}})}\leq C_{1}\left\Vert f\right\Vert _{L_a^{p),\theta _{1}}}$ and $\left\Vert \widehat{f}\right\Vert _{L ^{q-\eta }(\mathbb R^n, b(x)^{\frac{\eta}{q}})}\leq
C_{2}\left\Vert \widehat{f}\right\Vert _{L_b^{q),\theta _{2}}}.$ Thus
\begin{align*}
\left\Vert f\right\Vert _{ A^{p-\varepsilon , q-\eta  }(\mathbb R^n,a(x)^{\frac{\varepsilon}{p}}, b(x)^{\frac{\eta}{q}})}&=\left\Vert f\right\Vert
_{L^{p-\varepsilon }(\mathbb R^{n}, a(x)^{\frac{\varepsilon}{p}})}+\left\Vert \widehat{f}\right\Vert _{L ^{q-\eta }(\mathbb R^n, b(x)^{\frac{\eta}{q}})}\\ &\leq   C\left\{ \left\Vert f\right\Vert _{L_a^{p),\theta _{1}}}+\left\Vert \widehat{f}\right\Vert _{L_b^{q),\theta _{2}}}\right\} =C\left\Vert f\right\Vert _{A_{q),\theta
\notag_{2}}^{p),\theta _{1}}},
\end{align*}
where $C=\max \left\{ C_{1},C_{2}\right\} .$Then $f\in { A^{p-\varepsilon , q-\eta  }(\mathbb R^n,a(x)^{\frac{\varepsilon}{p}}, b(x)^{\frac{\eta}{q}})} $ and so,
\begin{equation*}
A_{q),\theta _{2}}^{p),\theta _{1}}\left( R^{n}\right) \subset \
{ A^{p-\varepsilon , q-\eta  }(\mathbb R^n,a(x)^{\frac{\varepsilon}{p}}, b(x)^{\frac{\eta}{q}})}.
\end{equation*}

Now let $g\in A^{p,q}\left(\mathbb R^{n}\right) .$ Then $g\in L^{p}\left(\mathbb  R^{n}\right) $
and $\widehat{g}\in L ^{q}(\widehat{ \mathbb R^{n}}).$ Since $L^{p}\left(\mathbb R^{n}\right)
\subset L^{p),\theta _{1}}\left( \mathbb R^{n}\right) $ and $ L ^{q}(\widehat{\mathbb R^{n}})\subset  L ^{q),\theta
_{2}}_b(\widehat{\mathbb R^{n}}),$ there exists $D_{1}>0$ and $
D_{2}>0$ such that $\left\Vert g\right\Vert _{L_a^{p),\theta _{1}}}\leq
D_{1}\left\Vert g\right\Vert _{L^{p}}$ and $\left\Vert \widehat{g}\right\Vert
_{L_b ^{q),\theta _{2}}}\leq D_{2}\left\Vert \widehat{g}\right\Vert _{L^{q}}.$ Then
\begin{eqnarray*}
\left\Vert g\right\Vert _{A_{q),\theta _{2}}^{p),\theta _{1}}} \leq D_{1}\left\Vert g\right\Vert _{L^p}+D_{2}\left\Vert
\widehat{g}\right\Vert _{L ^{q}} &\leq &D\left( \left\Vert g\right\Vert _{L^{p}}+\left\Vert \widehat{g}
\notag\right\Vert _{L ^{q}}\right) =D\left\Vert g\right\Vert _{A^{p,\text{ }q}},
\end{eqnarray*}
where $D=\max \left\{ D_{1},D_{2}\right\} .$ This implies $g\in A_{q),\theta
_{2}}^{p),\theta _{1}}\left(  R^{n}\right) $ and so,
\begin{equation}
A^{p,q}\left(\mathbb  R^{n}\right) \subset A_{q),\theta }^{p),\theta }\left(\mathbb R^{n}\right) .
\notag
\end{equation}
%The proof of  part (b) is the same as  the proof of first part.
\end{proof}
\begin{theorem}
 Let $1<p,q<\infty $ and $a(x),b(x)\in L^1(\mathbb R^{n}) .$

{\rm (a)}
 $\ {C_{0}^{\infty }\left(\mathbb R^{n}\right) }$ is dense in $ {A^{p,q}\left(\mathbb  R^{n}\right) },$ i.e 

\begin{equation*}
\ \overline{C_{0}^{\infty }\left( \mathbb R^{n}\right) }\mid _{A^{p,q}}=A^{p,q}\left(
 R^{n}\right) ,
\end{equation*}%
where $\ \overline{C_{0}^{\infty }\left(  R^{n}\right) }\mid _{A^{p,q}}$is the
closure of the set $C_{0}^{\infty }\left( R^{n}\right) $ in the space $%
A^{p,q}\left( R^{n}\right) .$

{\rm (b)}
\begin{equation*}
\overline{A^{p,q}\left( \mathbb R^{n}\right) }\mid _{A_{q),\theta _{2}}^{p),\theta_{1}}}
=\overline{C_{0}^{\infty }\left( \mathbb R^{n}\right) }\mid _{A_{q),\theta_{2}}^{p),\theta _{1}}},
\end{equation*}
where $\overline{A^{p,q}\left( \mathbb R^{n}\right) }\mid _{A_{q),\theta _{2}}^{p),\theta_{1}}}$ is the closure of the set ${A^{p,q}\left(\mathbb  R^{n}\right) }$ %\cap{A_{q),\theta _{2}}^{p),\theta _{1}}\left( R^{n}\right)}} $
in the space $A_{q),\theta _{2}}^{p),\theta _{1}}\left(\mathbb R^{n}\right) .$
%\end{theorem}

\begin{proof} %(a). %By Example $1$ it is easy to show that
%$C_{0}^{\infty }\left(\mathbb  R^{n}\right) \subset A^{p,q}\left( \mathbb R^{n}\right).$
% Also it is known by Theorem 4 that  $A^{p,q}\left(\mathbb R^{n}\right) \subset A_{q),\theta _{2}}^{p),\theta _{1}}\left( \mathbb R^{n}\right).$
% Let $f\in{A^{p,q}\left(\mathbb  R^{n}\right) }.$ Then $f\in L^{p}\left(\mathbb R^{n}\right) $
%and $\hat{f}\in L^{q}\left(\mathbb R^{n}
%\right) .$ Since $C_{0}^{\infty }\left(\mathbb R^{n}\right) $ is dense in $L^{p}\left(
%\mathbb R^{n}\right) ,$ there exists a sequence $\left( f_{n}\right) _{n\in
%\mathbb{N}}$ $\subset $ $C_{0}^{\infty }\left( \mathbb R^{n}\right) $ such that $\left(f_{n}\right) _{n\in
%\mathbb{N}}$ converges to $f$ in $L^{p}\left( \mathbb R^{n}\right) .$ By the continuity of the Fourier transform the sequence $\left( \hat{f}_{n}\right) _{n\in\mathbb{N}}$ converges to $\hat{f}$ in $L ^{q}\left(\widehat {\mathbb R^{n}}\right)$ and  $\hat{f}$ in $L ^{q}\left(\widehat{\mathbb R^{n}}\right).$   Hence,
%the sequence $\left( f_{n}\right) _{n\in \mathbb{N}}$ in $C_{0}^{\infty }\left(\mathbb  R^{n}\right) ,$ converges to $f$ $\ $in $
%A^{p,q}\left(\mathbb  R^{n}\right) .$ Thus $\ \overline{C_{0}^{\infty }\left(\mathbb R^{n}\right) }
%=A^{p,q}\left(\mathbb R^{n}\right) .$
Using the continuity of the Fourier transform, the property $( a)$ is easily proved

(b). By the inclusion
\begin{equation*}
C_{0}^{\infty }\left( \mathbb R^{n}\right) \subset{A^{p,q}\left(\mathbb  R^{n}\right) }\subset
A_{q),\theta _{2}}^{p),\theta _{1}}\left(\mathbb R^{n}\right) ,
\end{equation*}
we have
\begin{equation}
\overline{C_{0}^{\infty }\left( \mathbb R^{n}\right) }\mid _{A_{q),\theta
_{2}}^{p),\theta _{1}}}\subset \overline{{A^{p,q}\left(\mathbb  R^{n}\right) }}%\cap{A_{q),\theta _{2}}^{p),\theta _{1}}\left( R^{n}\right)}}
\mid_{A_{q),\theta _{2}}^{p),\theta _{1}}}.  %\tag{20}
\end{equation}
Since $\left\Vert f\right\Vert _{A_{q),\theta
_{2}}^{p),\theta _{1}}}\leq \left\Vert f\right\Vert _{A^{p,q}},$ this implies 
\begin{equation}
\ \overline{C_{0}^{\infty }\left(\mathbb R^{n}\right) }\mid _{A^{p,q}}\subset \overline{
C_{0}^{\infty }\left( \mathbb R^n\right) }\mid _{A_{q),\theta _{2}}^{p),\theta _{1}}}.
\end{equation}
From $\left( 4.8\right) $ and by (a), we have
\begin{equation*}
\overline{C_{0}^{\infty }\left(\mathbb R^{n}\right) }\mid _{A^{p,q}}=A^{p,q}\left(
\mathbb R^{n}\right) \subset \overline{C_{0}^{\infty }\left(\mathbb R^{n}\right) }\mid
_{A_{q),\theta _{2}}^{p),\theta _{1}}}.
\end{equation*}
This inclusion implies
\begin{equation}
\overline{A^{p,q}\left(\mathbb  R^{n}\right) }\mid _{A_{q),\theta _{2}}^{p),\theta_{1}}}\subset \overline{C_{0}^{\infty }\left(  \mathbb R^{n}\right) }\mid _{A_{q),\theta
_{2}}^{p),\theta _{1}}}.  %\tag{21}
\end{equation}
Combining (4.7) and (4.9), we obtain
\begin{equation*}
\overline{A^{p,q}\left( \mathbb R^{n}\right) }\mid _{A_{q),\theta _{2}}^{p),\theta_{1}}}
=\overline{C_{0}^{\infty }\left(\mathbb R^{n}\right) }\mid _{A_{q),\theta_{2}}^{p),\theta _{1}}}.
\end{equation*}
\end{proof}

\end{theorem}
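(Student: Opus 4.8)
The plan is to prove (a) by a \emph{regularize--then--truncate} construction that approximates simultaneously in the physical and the frequency variable, and then to deduce (b) from (a) by a soft argument about closures of nested subspaces. The key difficulty is confined to (a): the Fourier transform is \emph{not} continuous from $L^{p}$ into $L^{q}$, so the mere density of $C_{0}^{\infty}$ in $L^{p}$ is useless here; I must approximate a given $f\in A^{p,q}(\mathbb R^{n})$ at the same time in $\|\cdot\|_{L^{p}}$ and in $\|\widehat{\,\cdot\,}\|_{L^{q}}$. To this end I would fix a mollifier $\phi\in C_{0}^{\infty}(\mathbb R^{n})$ with $\int\phi=1$, put $\phi_{\delta}(x)=\delta^{-n}\phi(x/\delta)$, fix a cutoff $\psi\in C_{0}^{\infty}(\mathbb R^{n})$ with $\psi\equiv1$ near the origin and $\psi_{R}(x)=\psi(x/R)$, and set
\begin{equation*}
g_{\delta,R}=(f*\phi_{\delta})\,\psi_{R}.
\end{equation*}
Since $f*\phi_{\delta}$ is smooth and $\psi_{R}$ has compact support, $g_{\delta,R}\in C_{0}^{\infty}(\mathbb R^{n})$.

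I would then estimate the two pieces of $\|f-g_{\delta,R}\|_{A^{p,q}}$ separately, sending $\delta\to0$ first and $R\to\infty$ afterwards. On the physical side, $f*\phi_{\delta}\to f$ in $L^{p}$, while for fixed $\delta$ one has $(f*\phi_{\delta})(1-\psi_{R})\to0$ in $L^{p}$ as $R\to\infty$ by dominated convergence. On the frequency side I use $\widehat{f*\phi_{\delta}}=\widehat f\,\widehat{\phi_{\delta}}$ with $\widehat{\phi_{\delta}}(\gamma)=\widehat\phi(\delta\gamma)\to1$ boundedly, so that $\widehat f\,(1-\widehat{\phi_{\delta}})\to0$ in $L^{q}$ by dominated convergence; moreover $\widehat{g_{\delta,R}}=(2\pi)^{-n}(\widehat f\,\widehat{\phi_{\delta}})*\widehat{\psi_{R}}$, where $(2\pi)^{-n}\widehat{\psi_{R}}$ is an approximate identity (total mass $1$, concentrating at the origin because $\psi(0)=1$), whence $\widehat{g_{\delta,R}}\to\widehat f\,\widehat{\phi_{\delta}}$ in $L^{q}$ as $R\to\infty$ for the fixed $\delta$. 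Choosing $\delta$ small and then $R$ large makes each of the four terms arbitrarily small, giving $g_{\delta,R}\to f$ in $A^{p,q}(\mathbb R^{n})$ and hence $\overline{C_{0}^{\infty}}\,|_{A^{p,q}}=A^{p,q}(\mathbb R^{n})$.

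For (b) I would only use (a) together with the continuous inclusion $A^{p,q}(\mathbb R^{n})\subset A_{q),\theta_{2}}^{p),\theta_{1}}(\mathbb R^{n})$ from the preceding theorem, i.e. $\|f\|_{A_{q),\theta_{2}}^{p),\theta_{1}}}\le C\|f\|_{A^{p,q}}$. From $C_{0}^{\infty}\subset A^{p,q}$ and monotonicity of closure, $\overline{C_{0}^{\infty}}\,|_{A_{q),\theta_{2}}^{p),\theta_{1}}}\subset\overline{A^{p,q}}\,|_{A_{q),\theta_{2}}^{p),\theta_{1}}}$. Conversely, by (a) every $f\in A^{p,q}$ is an $A^{p,q}$-limit of functions in $C_{0}^{\infty}$; since the $A^{p,q}$-norm dominates the $A_{q),\theta_{2}}^{p),\theta_{1}}$-norm, the same sequence converges to $f$ in the $A_{q),\theta_{2}}^{p),\theta_{1}}$-norm, so $A^{p,q}\subset\overline{C_{0}^{\infty}}\,|_{A_{q),\theta_{2}}^{p),\theta_{1}}}$. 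Taking the $A_{q),\theta_{2}}^{p),\theta_{1}}$-closure of both sides and using that the right-hand set is already closed gives the reverse inclusion $\overline{A^{p,q}}\,|_{A_{q),\theta_{2}}^{p),\theta_{1}}}\subset\overline{C_{0}^{\infty}}\,|_{A_{q),\theta_{2}}^{p),\theta_{1}}}$, and the two inclusions yield the asserted equality.

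The main obstacle is entirely in (a), namely arranging \emph{simultaneous} approximation in both norms. The delicate point is the frequency side of the truncation step, where multiplication by $\psi_{R}$ turns into convolution of $\widehat f\,\widehat{\phi_{\delta}}$ with $\widehat{\psi_{R}}$; I must verify the normalization making $(2\pi)^{-n}\widehat{\psi_{R}}$ a genuine approximate identity in $L^{q}$, and check that the regularization step has already placed $\widehat f\,\widehat{\phi_{\delta}}$ in $L^{q}$ (it does, since $\widehat{\phi_{\delta}}$ is bounded and $\widehat f\in L^{q}$). Once the order of limits ($\delta\to0$ first, then $R\to\infty$) is respected, part (b) is purely formal.
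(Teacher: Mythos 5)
Your proposal is correct, and its two halves relate to the paper differently. For part (b) your argument is essentially the paper's own: the chain $C_{0}^{\infty}\left(\mathbb R^{n}\right)\subset A^{p,q}\left(\mathbb R^{n}\right)\subset A_{q),\theta_{2}}^{p),\theta_{1}}\left(\mathbb R^{n}\right)$ gives one containment of closures, and the domination of the $A_{q),\theta_{2}}^{p),\theta_{1}}$-norm by the $A^{p,q}$-norm combined with (a) gives $A^{p,q}\left(\mathbb R^{n}\right)\subset\overline{C_{0}^{\infty}}\mid_{A_{q),\theta_{2}}^{p),\theta_{1}}}$, whence the reverse containment after taking closures; your version is in fact slightly more careful, since you carry the constant $C$ from the inclusion theorem, while the paper writes $\left\Vert f\right\Vert_{A_{q),\theta_{2}}^{p),\theta_{1}}}\leq\left\Vert f\right\Vert_{A^{p,q}}$ with no constant (harmless for a closure argument, but not literally what the inclusion theorem provides). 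The genuine divergence is in part (a): the paper dismisses it in one sentence, ``using the continuity of the Fourier transform, the property (a) is easily proved,'' whereas you supply an actual proof via the regularize-then-truncate construction $g_{\delta,R}=(f\ast\phi_{\delta})\psi_{R}$, with a four-term estimate and the correct order of limits ($\delta\to0$ first, then $R\to\infty$). This fills a real gap rather than merely expanding a sketch: since $\mathcal F$ is not bounded from $L^{p}$ into $L^{q}$, continuity of the Fourier transform by itself proves nothing here, and the whole point is the \emph{simultaneous} approximation in $\left\Vert\cdot\right\Vert_{p}$ and $\left\Vert\widehat{\cdot}\right\Vert_{q}$ — in particular the identity $\widehat{g_{\delta,R}}=(2\pi)^{-n}\left(\widehat f\,\widehat{\phi_{\delta}}\right)\ast\widehat{\psi_{R}}$ and the verification that $(2\pi)^{-n}\widehat{\psi_{R}}$ is a dilation-type approximate identity of total mass $\psi(0)=1$, which your argument checks and the paper never addresses. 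What the paper's terseness buys is brevity (part (a) is classical, going back to the $A^{p}$ literature of Larsen--Liu--Wang and Martin--Yap); what your route buys is a self-contained and verifiable proof under the paper's own definition of $A^{p,q}\left(\mathbb R^{n}\right)$ with the distributional Fourier transform.
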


\section{The multipliers from  ${L^{1 }(\mathbb R^{n}, a^{\frac{\varepsilon}{p}})}$  to  $( L_a^{p),\theta}\left(\mathbb R^{n}\right))^{\ast}$ and from ${L^{1 }(\mathbb R^{n}, a^{\frac{\varepsilon}{p}})}$    to  $(A_{q),\theta_2}^{p),\theta_1}\left(\mathbb R^{n}\right))^{\ast}$  for ${0<\varepsilon \leq p-1}$ and   duality}  

In this section we will investigate the multipliers from  ${L^{1 }(\mathbb R^{n}, a^{\frac{\varepsilon}{p}})}$  into $ L_a^{p),\theta}\left(\mathbb R^{n}\right)$ and  ${L^{1 }(\mathbb R^{n}, a^{\frac{\varepsilon}{p}})}$  into  $A_{q),\theta_2}^{p),\theta_1}\left(\mathbb R^{n}\right)$  for ${0<\varepsilon \leq p-1}$.  First, we need the following Theorem. 
\begin{theorem}
 Let a(x) be a Beurling's weight function. Then

 a) The generalized  grand Lebesgue space $ L_a^{p),\theta}\left(\mathbb R^{n}\right)$ is an essentially Banach convolution module  over ${L^{1 }(\mathbb R^{n}, a^{\frac{\varepsilon}{p}})}$ for ${0<\varepsilon \leq p-1}$.

 b) The space  $A_{q),\theta_2}^{p),\theta_1}\left(\mathbb R^{n}\right)$ is an essentially Banach convolution module over  ${L^{1 }(\mathbb R^{n}, a^{\frac{\varepsilon}{p}})}$ for ${0<\varepsilon \leq p-1}$.
\end{theorem}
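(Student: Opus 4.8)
The plan is to establish the two ingredients of an essential Banach convolution module in turn: that $L^{1}(\mathbb R^{n},a^{\frac{\varepsilon}{p}})$ is a Banach algebra under convolution, and that each of the two spaces carries a bounded, non-degenerate convolution action of it. Since $a$ is a Beurling weight it is submultiplicative and $a\geq 1$; as $t\mapsto t^{\varepsilon/p}$ is increasing, $a^{\frac{\varepsilon}{p}}$ is again submultiplicative and $\geq 1$ for every $\varepsilon\in(0,p-1]$. Hence $L^{1}(\mathbb R^{n},a^{\frac{\varepsilon}{p}})$ is a (Beurling) Banach convolution algebra, a classical fact I would only cite. I would reduce part (b) to part (a), so the substance lies in (a).

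For the module inequality in (a) I would work inside the supremum defining the grand norm, writing $g\ast f=\int_{\mathbb R^{n}}g(y)\,T_{y}f\,dy$. Submultiplicativity gives, for the running parameter $\delta\in(0,p-1]$ of the grand norm, the translation estimate \[ \|T_{y}f\|_{L^{p-\delta}(\mathbb R^{n},a^{\frac{\delta}{p}})}\leq a(y)^{\frac{\delta}{p(p-\delta)}}\,\|f\|_{L^{p-\delta}(\mathbb R^{n},a^{\frac{\delta}{p}})}, \] and then Minkowski's integral inequality yields \[ \|g\ast f\|_{L^{p-\delta}(\mathbb R^{n},a^{\frac{\delta}{p}})}\leq\Big(\int_{\mathbb R^{n}}|g(y)|\,a(y)^{\frac{\delta}{p(p-\delta)}}\,dy\Big)\,\|f\|_{L^{p-\delta}(\mathbb R^{n},a^{\frac{\delta}{p}})}. \] Multiplying by $\delta^{\theta}$ and taking the supremum over $\delta$ reconstructs $\|f\|_{L_{a}^{p),\theta}}$ on the right, so everything hinges on controlling the $L^{1}$-factor uniformly in $\delta$ by $\|g\|_{L^{1}(\mathbb R^{n},a^{\frac{\varepsilon}{p}})}$. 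Because $a\geq1$ and $p-\delta\geq1$, the exponent $\tfrac{\delta}{p(p-\delta)}$ is increasing in $\delta$ and bounded by $\tfrac{p-1}{p}$, so the factor is dominated by $\|g\|_{L^{1}(\mathbb R^{n},a^{\frac{p-1}{p}})}$. This gives the module inequality $\|g\ast f\|_{L_{a}^{p),\theta}}\leq M\,\|g\|_{L^{1}(\mathbb R^{n},a^{\frac{p-1}{p}})}\,\|f\|_{L_{a}^{p),\theta}}$ cleanly at the endpoint $\varepsilon=p-1$; the module axioms (bilinearity, and associativity of convolution with the algebra product) are then automatic. I expect the comparison of the running exponent $\tfrac{\delta}{p(p-\delta)}$ with $\tfrac{\varepsilon}{p}$ to be the one genuinely delicate bookkeeping point, since the supremum runs all the way to $\delta=p-1$ while the algebra weight is fixed; asserting the result for every $\varepsilon\in(0,p-1]$ rather than only $\varepsilon=p-1$ would need either a restriction of the running range or an extra inclusion argument, and this is where I would be most careful.

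The non-degeneracy (``essential'') part I would obtain from a bounded approximate identity. Take $e_{\alpha}(x)=\alpha^{-n}\phi(x/\alpha)$ with $\phi\in C_{0}^{\infty}(\mathbb R^{n})$, $\int\phi=1$; since $a^{\frac{\varepsilon}{p}}$ is locally bounded and the supports shrink to the origin, $\|e_{\alpha}\|_{L^{1}(\mathbb R^{n},a^{\frac{\varepsilon}{p}})}$ stays bounded as $\alpha\to0$, so $(e_{\alpha})$ is a bounded approximate identity in the algebra. It then remains to prove $\|e_{\alpha}\ast f-f\|_{L_{a}^{p),\theta}}\to0$, which (by the module estimate above) would give $\overline{L^{1}(\mathbb R^{n},a^{\frac{\varepsilon}{p}})\ast L_{a}^{p),\theta}(\mathbb R^{n})}=L_{a}^{p),\theta}(\mathbb R^{n})$. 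I expect this to be the main obstacle: for each fixed $\delta$ the mollification converges in $L^{p-\delta}(\mathbb R^{n},a^{\frac{\delta}{p}})$, but the convergence must be made uniform in the grand-norm supremum, and the known failure of density of $C_{0}^{\infty}(\mathbb R^{n})$ in grand Lebesgue spaces shows this cannot be taken for granted. I would try to split the supremum, using the prefactor $\delta^{\theta}\to0$ to handle small $\delta$ and fixed-$\delta$ mollifier convergence on a compact range $[\eta,p-1]$ bounded away from $0$, while being prepared for the essential part to be genuinely the closure $\overline{C_{0}^{\infty}}(\mathbb R^{n})\mid_{L_{a}^{p),\theta}}$ rather than the full space.

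Finally, part (b) should follow from (a) together with the elementary behaviour of the Fourier transform. For $g\in L^{1}(\mathbb R^{n},a^{\frac{\varepsilon}{p}})$ and $f\in A_{q),\theta_{2}}^{p),\theta_{1}}(\mathbb R^{n})$ one has $\widehat{g\ast f}=\widehat{g}\,\widehat{f}$, and since $g\in L^{1}(\mathbb R^{n})$ (because $a^{\frac{\varepsilon}{p}}\geq1$), $\widehat{g}\in C_{0}(\mathbb R^{n})$ with $\|\widehat{g}\|_{\infty}\leq\|g\|_{1}\leq\|g\|_{L^{1}(\mathbb R^{n},a^{\frac{\varepsilon}{p}})}$. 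Multiplication by a bounded function is clearly bounded on $L_{b}^{q),\theta_{2}}$ with norm at most $\|\widehat{g}\|_{\infty}$, so $\|\widehat{g\ast f}\|_{L_{b}^{q),\theta_{2}}}\leq\|g\|_{L^{1}(\mathbb R^{n},a^{\frac{\varepsilon}{p}})}\|\widehat{f}\|_{L_{b}^{q),\theta_{2}}}$. Adding this to the estimate from (a) for $\|g\ast f\|_{L_{a}^{p),\theta_{1}}}$ gives $\|g\ast f\|_{A_{q),\theta_{2}}^{p),\theta_{1}}}\leq M\,\|g\|_{L^{1}(\mathbb R^{n},a^{\frac{\varepsilon}{p}})}\,\|f\|_{A_{q),\theta_{2}}^{p),\theta_{1}}}$, and the approximate identity of the previous paragraph (now tested in both summands of the $A$-norm, using $\widehat{e_{\alpha}\ast f}=\widehat{e_{\alpha}}\,\widehat{f}$) yields the essential property, modulo the same convergence issue flagged above.
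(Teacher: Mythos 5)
Your module estimates follow essentially the same route as the paper's own proof. For part (a) the paper likewise writes $\Vert f\ast g\Vert_{L_a^{p),\theta}}$ as the supremum of weighted $L^{p-\varepsilon}$ norms, applies Minkowski's integral inequality, uses the substitution $u=x-t$ together with submultiplicativity of $a$ to extract the factor $\int_{\mathbb R^{n}}\vert f(t)\vert\, a(t)^{\frac{\varepsilon}{p(p-\varepsilon)}}\,dt$, and then uses $a\ge 1$ and $p-\varepsilon\ge 1$ to dominate this by a weighted $L^{1}$ norm of $f$; part (b) is word for word your argument ($\widehat{f\ast g}=\hat f\,\hat g$, $\Vert\hat f\Vert_{\infty}\le\Vert f\Vert_{1}\le\Vert f\Vert_{L^{1}(\mathbb R^{n},a^{\varepsilon/p})}$, then add the two estimates).

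The two points you flagged as delicate are exactly the points where the paper's proof is defective, so your caution is substantive rather than excessive. First, in the paper's chain of inequalities the same symbol $\varepsilon$ serves both as the running variable of the grand-norm supremum and as the fixed parameter of the algebra $L^{1}(\mathbb R^{n},a^{\varepsilon/p})$; after the supremum is taken, the factor $\Vert f\Vert_{L^{1}(\mathbb R^{n},a^{\varepsilon/p})}$ still depends on the running variable, and since $a\ge1$ makes the weights increase with $\varepsilon$, the only uniform bound is the endpoint one, $\Vert f\Vert_{L^{1}(\mathbb R^{n},a^{(p-1)/p})}$ --- precisely your observation that the clean statement is the case $\varepsilon=p-1$, while the claim for every $\varepsilon\in(0,p-1]$ would need an inclusion among the weighted $L^{1}$ spaces that goes the wrong way. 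Second, for essentiality the paper takes a bounded approximate identity $(e_{\alpha})\subset C_{0}^{\infty}(\mathbb R^{n})$ (citing Wang and Murty--Unni), then chooses $\varepsilon_{0}$ nearly attaining the supremum defining $\Vert e_{\alpha}\ast f-f\Vert_{L_a^{p),\theta}}$, and only afterwards chooses $\alpha_{0}$ so that the $L^{p-\varepsilon_{0}}(\mathbb R^{n},a^{\varepsilon_{0}/p})$-norm is small. This is circular: $\varepsilon_{0}$ depends on $e_{\alpha}\ast f-f$, hence on $\alpha$, so it cannot be fixed before $\alpha_{0}$ is selected; this is exactly the lack of uniformity over the supremum that you identified. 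Your structural worry is also well founded: $e_{\alpha}\ast f$ is a better function than $f$ (it lies in $L^{\infty}$ and in the subspace where $\lim_{\varepsilon\to0}\varepsilon^{\theta}\Vert\cdot\Vert_{p-\varepsilon}=0$), while by the paper's own Theorem 4 the closure of $C_{0}^{\infty}(\mathbb R^{n})$ in the grand space is a proper closed subspace, so convergence $e_{\alpha}\ast f\to f$ in the grand norm cannot hold for $f$ outside that closure, and essentiality in the stated generality is genuinely in doubt. In short, your proposal matches the paper on everything the paper actually proves, and the gap you honestly admit in the essential part is not a defect of your attempt alone --- it is a gap in the paper's proof as well.
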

\begin{proof}
a) It is easy to show that $ L_a^{p),\theta}\left(\mathbb R^{n}\right)$   is a module on  ${L^{1 }(\mathbb R^{n}, a^{\frac{\varepsilon}{p(p-\varepsilon)}})}$ algebraically  for ${0<\varepsilon \leq p-1}$ .
Now let $ f\in{L^{1 }(\mathbb R^{n}, a^{\frac{\varepsilon}{p(p-\varepsilon)}})}$ for ${0<\varepsilon \leq p-1}$ and $g\in  L_{a}^{p),\theta}(\mathbb R^n).$ Then
\begin{align}
\left\Vert f\ast g\right\Vert _{L_a^{p),\theta}}&=\sup_{0<\varepsilon \leq p-1}\varepsilon^{\theta_1} \left\Vert f\ast g\right\Vert_{L^{p-\varepsilon }(\mathbb R^{n}, a^{\frac{\varepsilon}p})} = \sup_{0<\varepsilon \leq p-1}\varepsilon^{\theta} \left\Vert \int_{\mathbb R^{n}}f(t) g(x-t)dt \right\Vert_{L^{p-\varepsilon }(\mathbb R^{n}, a^{\frac{\varepsilon}p})}\notag
\\&\leq\sup_{0< \varepsilon \leq  p-1}\varepsilon^{\theta}\int_{\mathbb R^{n}}\mid f(t)\mid\|g(x-t)\|_{{L^{p-\varepsilon }(\mathbb R^{n}, a^{\frac{\varepsilon}p)}}}dt
\end{align}

If we make the substitution $u=x-t$ in (6.1), we obtain
\begin{align}
\left\Vert f\ast g\right\Vert _{L_a^{p),\theta}}&\leq \sup_{0< \varepsilon \leq  p-1}\varepsilon^{\theta}\|g\|_{{L^{p-\varepsilon }(\mathbb R^{n}, a^{\frac{\varepsilon}p})}}\int_{\mathbb R^{n}}\mid f(t)\mid a^{\frac{\varepsilon}{p(p-\varepsilon}} dt
\\
&=\left\Vert f\right\Vert _{{L^{1 }(\mathbb R^{n}, a^{\frac{\varepsilon}{p(p-\varepsilon}}})}\left\Vert
g\right\Vert _{L_a^{p),\theta_1}}\leq\left\Vert f\right\Vert _{{L^{1 }(\mathbb R^{n}, a^{\frac{\varepsilon}{p}}})}\left\Vert
g\right\Vert _{L_a^{p),\theta_1}}.\   \notag
\end{align}
%It is easy to show that $A_{b,q)}^{a,p)}\left( \mathbb R^{n}\right) $  is a module over $L^{1}(\mathbb R^{n}),$ algebraically.
Thus we proved that  $ L_a^{p),\theta}\left(\mathbb R^{n}\right)$ is a Banach convolution module  over ${L^{1 }(\mathbb R^{n}, a^{\frac{\varepsilon}{p}})}.$

Now let's show that $ L_a^{p),\theta}\left(\mathbb R^{n}\right)$ is essentially. Since a(x) is a Beurling's weight function, by Theorem $ 5.3 $ in \cite{w},  ${L^{1 }(\mathbb R^{n}, a^{\frac{\varepsilon}{p}})}$ is Banach convolution algebra and admits a bounded approximate $(e_{\alpha})_{\alpha\in I}\subset \ {C_{0}^{\infty }\left(\mathbb R^{n}\right) }.$ It is also known from Lemma $6$ in \cite{mu} that for Beurling's weight function , the weighted $L^p\left(\mathbb R^{n}\right) $ space admits an approximate identity, that is bounded in weighted $L^1\left(\mathbb R^{n}\right) $ space. From this results we observe that  $(e_{\alpha})_{\alpha\in I}$ is an approximate identity of ${{L^{p-\varepsilon }(\mathbb R^{n}, a^{\frac{\varepsilon}{p}}})}$ for ${0< \varepsilon \leq  p-1}.$ Let  $f\in  L_{a}^{p),\theta}(\mathbb R^n)$ and let $\eta>0$. According to the definition of supremum there exists  ${0< \varepsilon_0 \leq  p-1},$ such that 
\begin{align}
\left\Vert e_{\alpha}\ast f-f\right\Vert _{L_a^{p),\theta}}&=\varepsilon_0^{\theta} \left\Vert e_{\alpha}\ast f-f\right\Vert_{L^{p-\varepsilon_0 }(\mathbb R^{n}, a^{\frac{\varepsilon_0}p})} +\frac{\eta}2.
\end{align}
Since  $(e_{\alpha})_{\alpha\in I} $ is an approximate identity of ${L^{p-\varepsilon_0 }(\mathbb R^{n}, a^{\frac{\varepsilon_0}p})},$ then there exists $\alpha_0\in I$ such that 
\begin{align}
\left\Vert e_{\alpha}\ast f-f\right\Vert_{L^{p-\varepsilon_0 }(\mathbb R^{n}, a^{\frac{\varepsilon_0}p})}\leq\frac{\eta}{2\varepsilon^{\theta}_0}. 
\end{align}
for all $\alpha>\alpha_0.$ Hence from $(5.3)$ and $(5.4)$ we obtain 
\begin{align}
\left\Vert e_{\alpha}\ast f-f\right\Vert _{L_a^{p),\theta}}<\eta
\end{align}
for all $\alpha>\alpha_0$. This completes the proof of the first part. 

b)  Let $ f\in{L^{1 }(\mathbb R^{n}, a^{\frac{\varepsilon}{p}})} $ and $g\in A_{q),\theta_2}^{p,\theta_1}\left(\mathbb R^{n}\right) .$ Then   $g\in
L^{p),\theta }_a\left(\mathbb R^{n}\right) $ and $\ \widehat{g}\in L ^{q),\theta_{2}}_b(\widehat{\mathbb R^{n}}).$ From (5.2) in part (a) we write
\begin{align}
\left\Vert f\ast g\right\Vert _{L_a^{p),\theta}}&\leq\left\Vert f\right\Vert _{{L^{1 }(\mathbb R^{n}, a^{\frac{\varepsilon}{p}}})}\left\Vert
g\right\Vert _{L_a^{p),\theta_1}}.
\end{align}
Since $f\in{L^{1 }(\mathbb R^{n}, a^{\frac{\varepsilon}{p}})} \subset L^{1}\left( \mathbb R^{n}\right) ,$  the Fourier transform $\hat{f}$\ is bounded and $
\left\Vert \hat{f}\ \right\Vert _{\infty }\leq \left\Vert f\right\Vert _{1}\leq\left\Vert f\right\Vert _{{L^{1 }(\mathbb R^{n}, a^{\frac{\varepsilon}{p}}})}$
Thus
\begin{align}
\left\Vert \widehat{f\ast g}\right\Vert_{L^{q),\theta_2}_b}& =\left\Vert\hat{f}\ .\widehat{g}\ \right\Vert _{L^{q)}_b}=
\sup_{0<\varepsilon \leq q-1}\varepsilon^{\theta_2} \left\Vert\hat{f}\ .\widehat{g}\right\Vert_{L^{q-\varepsilon }(\mathbb R^{n},\notag b^{\frac{\varepsilon}q})} 
\\
&\leq\sup_{0<\varepsilon \leq p-1}\varepsilon^{\theta_2} \left\Vert \hat{f}\ \right\Vert _{\infty\notag
}\left\Vert \widehat{g}\right\Vert _{L^{q-\varepsilon }(\mathbb R^{n}, b^{\frac{\varepsilon}p})}
\\ &\leq\left\Vert f\right\Vert _{{L^{1 }(\mathbb R^{n}, a^{\frac{\varepsilon}{p}}})}\left\Vert\widehat{g}\right\Vert _{L^{q),\theta_2}_b}.
\end{align}

Combining $\left( 5.6\right) $ and $\left( 5.7\right) $ we obtain
\begin{align}
\left\Vert f\ast g\right\Vert _{A_{q),\theta_2}^{p),\theta1}\left(\mathbb R^{n}\right) } &=\left\Vert f\ast g\right\Vert _{L_a^{p),\theta_1}}+\left\Vert \widehat{f\ast g}\right\Vert_{L^{q),\theta_2}_b} 
\\
& \leq\left\Vert f\right\Vert _{{L^{1 }(\mathbb R^{n}, a^{\frac{\varepsilon}{p}}})}\left\Vert g\right\Vert _{L_a^{p),\theta_1}}+\left\Vert f\right\Vert _{{L^{1 }(\mathbb R^{n}, a^{\frac{\varepsilon}{p}}})}\left\Vert \widehat{g}\right\Vert _{L^{q),\theta_2}_b}.\notag
\\
&=\left\Vert f\right\Vert _{{L^{1 }(\mathbb R^{n}, a^{\frac{\varepsilon}{p}}})}\left( \left\Vert g\right\Vert _{L_a^{p),\theta_1}}+\left\Vert \widehat{g}\right\Vert _{{L_a^{q),\theta_2}}}\right) \notag
\\&=\left\Vert f\right\Vert _{{L^{1 }(\mathbb R^{n}, a^{\frac{\varepsilon}{p}}})}\left\Vert g\right\Vert _{A_{q),\theta_2}^{p),\theta_1}\left( \mathbb R^{n}\right) }.\notag 
\end{align}
It is easy to show that $A_{b,q)}^{a,p)}\left( \mathbb R^{n}\right) $  is a module over $L^{1}(\mathbb R^{n}),$ algebraically.
Hence the space  $A_{q),\theta_2}^{p),\theta_1}\left(\mathbb R^{n}\right)$ is a  Banach convolution module over  ${L^{1 }(\mathbb R^{n}, a^{\frac{\varepsilon}{p}})}$ for ${0<\varepsilon \leq p-1}$.

\end{proof}

\begin{proposition}
Let $a(x)$ be a Beurlin's weight function. Then 

a) The space of multipliers $M\left( L^{1}(\left( \mathbb R^{n}\right),a^{\frac{\varepsilon}p} ),\left(
L_a^{{p),\theta }}\left( \mathbb R^{n}\right) \right) ^{\ast }\right) $
is isometrically isometric to the dual space  $( L_a^{p),\theta}\left(\mathbb R^{n}\right))^{\ast}$ of 
the generalized  grand Lebesgue space $ L_a^{p),\theta}\left(\mathbb R^{n}\right).$ Then

b)  The space of multipliers $M\left( L^{1}(\left( \mathbb R^{n}\right),a^{\frac{\varepsilon}p} ),\left(
A_{q),\theta _{2}}^{p),\theta _{1}}\left( \mathbb R^{n}\right) \right) ^{\ast }\right) $
is isometrically isometric to the dual space $\left( L^{1}\left(  \mathbb R^{n}\right)
\ast A_{q),\theta _{2}}^{p),\theta _{1}}\left(  \mathbb R^{n}\right) \right) ^{\ast }$
of $\ $the space $L^{1}\left( \mathbb R^{n}\right) \ast A_{q),\theta _{2}}^{p),\theta
_{1}}\left( \mathbb R^{n}\right) .$
\end{proposition}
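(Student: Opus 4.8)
The plan is to deduce both statements from the module--tensor--product duality of Rieffel recalled in Section~2, namely $\mathrm{Hom}_{A}(B_{1},B_{2}^{\ast})\cong(B_{1}\otimes_{A}B_{2})^{\ast}$ (Corollary~2.13 in \cite{ri1}), applied with the Banach convolution algebra $A=L^{1}(\mathbb R^{n},a^{\frac{\varepsilon}{p}})$. Everything needed to run this machine is furnished by the preceding Theorem: since $a$ is a Beurling weight, $A$ is a Banach convolution algebra carrying a bounded approximate identity $(e_{\alpha})_{\alpha\in I}\subset C_{0}^{\infty}(\mathbb R^{n})$, and both $L_{a}^{p),\theta}(\mathbb R^{n})$ and $A_{q),\theta_{2}}^{p),\theta_{1}}(\mathbb R^{n})$ are \emph{essential} Banach convolution modules over $A$.

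For part~(a) I would take $B_{1}=A$, regarded as a module over itself, and $B_{2}=L_{a}^{p),\theta}(\mathbb R^{n})$, so that $B_{2}^{\ast}=(L_{a}^{p),\theta}(\mathbb R^{n}))^{\ast}$ and the left-hand side of the Rieffel isomorphism is exactly the multiplier space $M(L^{1}(\mathbb R^{n},a^{\frac{\varepsilon}{p}}),(L_{a}^{p),\theta}(\mathbb R^{n}))^{\ast})$ appearing in the statement. It then remains to evaluate $A\otimes_{A}L_{a}^{p),\theta}(\mathbb R^{n})$. Here I invoke the standard fact that, for a Banach algebra with bounded approximate identity acting essentially on a module $B$, the canonical convolution map $a\otimes g\mapsto a\ast g$ is an isometric isomorphism $A\otimes_{A}B\cong B$; the essentiality hypothesis is precisely part~(a) of the preceding Theorem. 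Dualizing gives $M(A,(L_{a}^{p),\theta})^{\ast})\cong(L_{a}^{p),\theta})^{\ast}$, which is the assertion of part~(a).

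For part~(b) I would keep $B_{1}=A=L^{1}(\mathbb R^{n},a^{\frac{\varepsilon}{p}})$ and set $B_{2}=A_{q),\theta_{2}}^{p),\theta_{1}}(\mathbb R^{n})$, which by part~(b) of the preceding Theorem is an essential Banach convolution module over $A$. The Rieffel isomorphism again gives $M(A,B_{2}^{\ast})\cong(A\otimes_{A}B_{2})^{\ast}$, and the remaining task is to realize the module tensor product concretely. Using the bounded approximate identity together with the module-norm estimate $\|f\ast g\|\le\|f\|_{L^{1}(\mathbb R^{n},a^{\frac{\varepsilon}{p}})}\|g\|$ established in the preceding Theorem, the canonical map $f\otimes g\mapsto f\ast g$ identifies $A\otimes_{A}A_{q),\theta_{2}}^{p),\theta_{1}}(\mathbb R^{n})$ isometrically with the closed convolution span $L^{1}(\mathbb R^{n})\ast A_{q),\theta_{2}}^{p),\theta_{1}}(\mathbb R^{n})$ (which coincides with the module itself by essentiality). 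Passing to duals produces the stated isomorphism.

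The step I expect to be the main obstacle is exactly this concrete identification of the module tensor product: one must check that the canonical convolution map is simultaneously norm-preserving and a bijection onto the essential module in~(a), respectively onto the closed convolution span in~(b). This is where essentiality is genuinely used, through the approximate-identity convergence $\|e_{\alpha}\ast f-f\|\to0$ proved in the preceding Theorem, which yields surjectivity since each $e_{\alpha}\ast f$ lies in the convolution span and tends to $f$, while the isometry is the content of the essential-module form of Rieffel's theorem (\cite{ri1,ri2}). A minor but real technical point to watch is the distinction between the weighted algebra $L^{1}(\mathbb R^{n},a^{\frac{\varepsilon}{p}})$ driving the module action and the unweighted $L^{1}(\mathbb R^{n})$ appearing in the displayed convolution span; these must be reconciled using $a\geq1$ and essentiality. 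Once the identification is secured, matching $B_{1},B_{2}$ to the statement and dualizing are purely formal.
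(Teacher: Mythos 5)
Your proposal follows essentially the same route as the paper: both invoke the preceding Theorem for the (essential) Banach convolution module structure over $L^{1}(\mathbb R^{n},a^{\frac{\varepsilon}{p}})$, identify the module tensor product with the space itself in (a) (the paper via the module factorization theorem, you via the standard essential-module isomorphism $A\otimes_{A}B\cong B$) and with the convolution span in (b), and then apply Rieffel's Corollary 2.13 and dualize. Your version merely makes explicit the tensor-product identification and the weighted-versus-unweighted $L^{1}$ issue that the paper passes over silently.
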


\begin{proof}
a) By Theorem { 8},  the generalized  grand Lebesgue space $ L_a^{p),\theta}\left(\mathbb R^{n}\right)$ is an essentially Banach convolution module  over ${L^{1 }(\mathbb R^{n}, a^{\frac{\varepsilon}{p}})}$ for ${0<\varepsilon \leq p-1}.$ Then by the module factorization theorem (see $[36]$),  $L^{1}\left( \mathbb R^{n}\right) \ast L_a^{p),\theta}\left(\mathbb R^{n}\right)=L_a^{p),\theta}\left(\mathbb R^{n}\right) .$ Thus  From Corollary
2.13 in \cite{ri1}, we obtain
 \begin{align} M\left( L^{1}(\left( \mathbb R^{n}\right),a^{\frac{\varepsilon}p} ),\left(
L_a^{{p),\theta }}\left( \mathbb R^{n}\right) \right) ^{\ast }\right) = ( L_a^{p),\theta}\left(\mathbb R^{n}\right))^{\ast}.
\end{align}

b) Similarly by Theorem $8$, $A_{q),\theta _{2}}^{p),\theta _{1}}\left(
 \mathbb R^{n}\right) $ is a Banach convolution module  over ${L^{1 }(\mathbb R^{n}, a^{\frac{\varepsilon}{p}})}$ for ${0<\varepsilon \leq p-1}.$ Thus ${L^{1 }(\mathbb R^{n}, a^{\frac{\varepsilon}{p}})}
\ast $ $A_{q),\theta _{2}}^{p),\theta _{1}}\left( \mathbb R^{n}\right) \subset $ $
A_{q),\theta _{2}}^{p),\theta _{1}}\left(  \mathbb R^{n}\right) .$ Again from Corollary
2.13 in \cite{ri1}, we have
\begin{equation*}
M\left( L^{1}\left(  \mathbb R^{n}\right) ,\left( A_{q),\theta _{2}}^{p),\theta
_{1}}\left(  \mathbb R^{n}\right) \right) ^{\ast }\right) =\left( L^{1}\left(  \mathbb R^{n}\right)
\ast A_{q),\theta _{2}}^{p),\theta _{1}}\left(  \mathbb R^{n}\right) \right) ^{\ast }.
\end{equation*}
\end{proof}

%\section{Duality and reflexivity in  $A_{q),\theta_2 }^{p),\theta _{1}}\left( \mathbb R^{n}\right)$ }
Remember some definitions and theorems that are necessary for us.
\begin{definition}
Let $(X,\|.\|_X)$ be Banach function space and let $f\in X.$ We say that $f$ has absolutely continuous norm in $X$ if
\begin{align*}
\lim_{n\rightarrow\infty}\|f\chi_{E_n}\|_X=0
\end{align*}
for all $\{E_n\}, n\in\mathbb N$ satisfying $E_n\rightarrow \emptyset.$ We denote by $X_a$ the set of functions in $X$ with absolutely continuous norm. If $X_a=X,$ then X is said to have absolute continuous norm.
\end{definition}
\begin{theorem}
( See \cite{bs} , Corollary $4.3$). The dual space $X^{\ast }$of a
Banach function space $X$ is canonically isometric to the associate space $X^{^{\prime
}} $ if and only if $X$ has absolutely continuous norm.
\end{theorem}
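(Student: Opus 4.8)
The plan is as follows. For any Banach function space $X$ over the underlying ($\sigma$-finite) measure space, the associate space $X'$ always embeds isometrically into $X^\ast$ through the canonical map $J:g\mapsto\big(f\mapsto\int fg\,d\mu\big)$; this is a consequence of the Lorentz--Luxemburg theorem $X''=X$ and is proved without any absolute-continuity hypothesis. Hence the entire content of the statement is that $J$ is \emph{onto} precisely when $X$ has absolutely continuous norm, and I would establish the two implications separately.

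For the direction ``absolutely continuous norm $\Rightarrow$ $X^\ast=X'$'' I would take $L\in X^\ast$ and reconstruct its representing function by a Radon--Nikodym argument. Since characteristic functions of sets of finite measure lie in $X$, the set function $\lambda(E)=L(\chi_E)$ is well defined on such sets. Absolute continuity of the norm gives $\|\chi_{E_n}\|_X\to 0$ whenever $E_n\downarrow\emptyset$, so $\lambda$ is countably additive, and it is absolutely continuous with respect to $\mu$ because $\mu(E)=0$ forces $\chi_E=0$ in $X$; the Radon--Nikodym theorem then yields a locally integrable $g$ with $\lambda(E)=\int_E g\,d\mu$. Consequently $L(s)=\int sg\,d\mu$ for every simple function $s$. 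Because $X$ has absolutely continuous norm the simple functions are dense in $X$, so it remains to check that $g\in X'$ with $\|g\|_{X'}\le\|L\|$; this I would obtain by testing $\int|g|\,|s|$ against normalized simple functions $s$ and passing to the supremum, after which $L$ and $J(g)$ agree on a dense subspace and hence everywhere.

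For the converse I would argue by contraposition. If $X$ does not have absolutely continuous norm, choose $f\in X$ lying outside the closed subspace $X_a$ of elements with absolutely continuous norm (equivalently, outside the closure of the simple functions). Since $\mathrm{dist}(f,X_a)>0$, the Hahn--Banach theorem produces $L\in X^\ast$ with $L\equiv 0$ on $X_a$ but $L(f)\ne 0$. In particular $L$ annihilates every characteristic function of a set of finite measure; were $L=J(g)$ for some $g\in X'$, then $\int_E g\,d\mu=0$ for all such $E$ would force $g=0$ a.e.\ and hence $L=0$, contradicting $L(f)\ne 0$. Thus $J$ is not surjective, which is the desired contrapositive.

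I expect the main obstacle to be the forward direction, and specifically the verification that the Radon--Nikodym density $g$ really belongs to the associate space with $\|g\|_{X'}=\|L\|$ rather than merely representing $L$ on simple functions: one must combine the Fatou property of the norm with the density of simple functions (itself a consequence of absolute continuity) to upgrade the finite-measure identity to the global statement $L=J(g)$, and to make the measure-theoretic steps rigorous one relies on the $\sigma$-finiteness built into the Banach-function-space axioms.
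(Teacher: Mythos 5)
The paper itself contains no proof of this statement: it is quoted, with citation, from Bennett and Sharpley (Corollary 4.3 of Chapter 1), so your proposal can only be compared with the standard argument given there. Your forward direction (absolutely continuous norm implies $X^{\ast}=X'$) is essentially that standard argument and is sound: Radon--Nikodym applied to $\lambda(E)=L(\chi_{E})$, countable additivity of $\lambda$ from absolute continuity of the norm, the Fatou property to get $\|g\|_{X'}\leq \|L\|$, and density of the simple functions (valid because $X=X_{a}$) to conclude $L=J(g)$. One quibble: the isometric embedding $X'\hookrightarrow X^{\ast}$ needs only the lattice property and the definition of the associate norm, not the Lorentz--Luxemburg theorem.

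The converse direction, however, has a genuine gap: you identify $X_{a}$ with the closure of the simple functions, and these are different spaces in general -- the difference between them is precisely what this theorem is about. Take $X=L^{\infty}([0,1])$: there $X_{a}=\{0\}$, while the closure of the simple functions is all of $L^{\infty}$, since every bounded measurable function is a uniform limit of simple functions. This breaks your argument under either reading. If you separate $f$ from $X_{a}$ by Hahn--Banach, the resulting $L$ vanishes only on $X_{a}$, and your next step, ``$L$ annihilates every $\chi_{E}$,'' is unjustified: $\chi_{E}$ need not lie in $X_{a}$ (in $L^{\infty}$ no nonzero $\chi_{E}$ does, yet such an $L$ can perfectly well be integral, e.g.\ $L(h)=\int_{0}^{1}h\,dx$). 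If instead you separate $f$ from the closure of the simple functions, you only prove that surjectivity of $J$ forces the simple functions to be dense, which is strictly weaker than absolute continuity of the norm; again $L^{\infty}$ is the witness, since simple functions are dense there while $(L^{\infty})^{\ast}\neq L^{1}$. The standard repair is different: assume $X^{\ast}=X'$, take $f\in X$ and $E_{n}\downarrow\emptyset$; every integral functional $L_{g}$ satisfies $L_{g}(|f|\chi_{E_{n}})\to 0$ by dominated convergence (the integrand is dominated by $|fg|\in L^{1}$), so $|f|\chi_{E_{n}}\to 0$ weakly; since this sequence is nonnegative and nonincreasing, Mazur's theorem gives norm-small convex combinations, each of which dominates a tail term $|f|\chi_{E_{N}}$, whence $\| |f|\chi_{E_{n}}\|_{X}\downarrow 0$ and $X=X_{a}$.
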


\begin{theorem}
( See \cite{bs} , Corollary $4.4$). A Banach function space is
reflexive if and only if both $X$ and its associate space $X^{^{\prime }}$
have absolute continuous norm.
\end{theorem}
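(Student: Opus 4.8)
The plan is to derive this reflexivity criterion from the preceding theorem (the canonical identification $X^{\ast}=X'$ for spaces of absolutely continuous norm) together with the Lorentz--Luxemburg duality theorem $X''=X$, which holds isometrically for every Banach function space \cite{bs}. First I would record the standing fact that the canonical evaluation map $J\colon X\to X^{\ast\ast}$ is always a linear isometry into $X^{\ast\ast}$, so that proving reflexivity reduces entirely to showing that $J$ is surjective.

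For the sufficiency direction, suppose both $X$ and $X'$ have absolutely continuous norm. Applying the preceding theorem to $X$ gives the canonical isometry $X^{\ast}=X'$. Since $X'$ is itself a Banach function space and, by hypothesis, has absolutely continuous norm, the preceding theorem applied to $X'$ yields $(X')^{\ast}=(X')'$; invoking Lorentz--Luxemburg we have $(X')'=X''=X$. Composing these two canonical identifications gives $X^{\ast\ast}=(X^{\ast})^{\ast}=(X')^{\ast}=X$, and one checks that the resulting composite is precisely the evaluation map $J$. Hence $J$ is onto and $X$ is reflexive.

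For the necessity direction, assume $X$ is reflexive. The crux is to rule out any failure of absolute continuity. The key structural input I would invoke is that a Banach function space lacking absolutely continuous norm contains a closed sublattice order-isometric to $\ell^{\infty}$. Since $\ell^{\infty}$ is not reflexive and reflexivity is inherited by closed subspaces, $X$ must have absolutely continuous norm; by the preceding theorem this gives $X^{\ast}=X'$. Because the dual of a reflexive space is reflexive, $X'$ is reflexive, and repeating the same $\ell^{\infty}$-embedding argument for the Banach function space $X'$ forces $X'$ to have absolutely continuous norm as well.

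The main obstacle is this necessity direction, and specifically the $\ell^{\infty}$-embedding lemma: from a single function witnessing the failure of absolute continuity one must extract a disjointly supported sequence on which the ambient norm is comparable to the supremum norm, thereby producing the lattice copy of $\ell^{\infty}$ that obstructs reflexivity. Once this is in hand, the remainder is routine bookkeeping with the two canonical isometries $X^{\ast}=X'$ and $X''=X$.
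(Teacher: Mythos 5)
The paper offers no proof of this statement at all: it is quoted verbatim from Bennett--Sharpley \cite{bs} (Chapter 1, Corollary 4.4), so your attempt has to be measured against the textbook argument. Your sufficiency direction is exactly that argument: absolute continuity of both norms gives $X^{\ast}=X'$ and $(X')^{\ast}=(X')'=X''$ via the preceding theorem, and the Lorentz--Luxemburg identity $X''=X$ is legitimate here because the Fatou property is part of the definition of a Banach function space in \cite{bs}; checking that the composite identification is the evaluation map is routine. Your necessity direction, however, is a genuinely different and much heavier route than is needed. The necessity can be dispatched without any lattice embedding: the associate space $X'$ always sits inside $X^{\ast}$ as a closed subspace which is \emph{norming} for $X$ (this is Lorentz--Luxemburg again, since $\|f\|_{X}=\|f\|_{X''}=\sup\{|\int fg\,dx|:\|g\|_{X'}\leq 1\}$). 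If $X$ is reflexive and $X'$ were a proper closed subspace of $X^{\ast}$, Hahn--Banach would give a nonzero $\Phi\in X^{\ast\ast}$ annihilating $X'$; reflexivity writes $\Phi$ as evaluation at some $f\in X$, and the norming property then forces $\|f\|_{X}=0$, a contradiction. So $X^{\ast}=X'$, whence $X$ has absolutely continuous norm by the preceding theorem, and repeating the argument for $X'=X^{\ast}$ (reflexive, normed by $X''=X$) finishes the proof.

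Your route through Banach lattice theory does work, but with two caveats. First, the structural theorem you invoke (Lozanovskii; Lindenstrauss--Tzafriri, Proposition 1.a.7; Meyer--Nieberg) produces a closed sublattice lattice-\emph{isomorphic} to $\ell^{\infty}$, not order-isometric as you assert; isomorphic is all your argument needs, but the isometric claim is false in general. Second, and more seriously, the sketch you give for this lemma would fail as stated: from $\|f\chi_{E_n}\|\geq\varepsilon$ with $E_n\downarrow\emptyset$ one cannot simply take the consecutive disjoint pieces $f\chi_{E_n\setminus E_{n+1}}$, because their norms may tend to zero. This happens already in the grand Lebesgue space $L^{p)}(0,1)$ with $f(t)=t^{-1/p}$ and $E_n=(0,2^{-n})$ --- precisely the kind of space this paper studies. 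The repair is to observe that the increasing, order-bounded sequence $f\chi_{E_1\setminus E_n}$ cannot be norm-Cauchy (its only possible norm limit is $f\chi_{E_1}$, contradicting $\|f\chi_{E_n}\|\geq\varepsilon$), and then to select blocks $y_k=f\chi_{E_{n_k}\setminus E_{m_k}}$ with $n_k<m_k\leq n_{k+1}$ and $\|y_k\|\geq\varepsilon'$; these are disjointly supported and dominated by $f$, so $a\mapsto\sum_k a_k y_k$ embeds $\ell^{\infty}$ isomorphically as a sublattice. With that repair, or with an honest citation of the lattice theorem in place of the sketch, your proof is complete; but it trades the short Hahn--Banach/norming argument available in \cite{bs} for substantially heavier machinery, whose only dividend is the explicit $\ell^{\infty}$ obstruction.
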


\begin{proposition}
There exists weight function $a(x)$ such that the corresponding space   $L_{a}^{p),\theta}(\mathbb R^n),$   do not have absolutely continuous norm.
\end{proposition}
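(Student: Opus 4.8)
The plan is to exhibit a concrete grandizer $a(x)$ together with an explicit function $f \in L_a^{p),\theta}(\mathbb{R}^n)$ and a shrinking sequence of sets $E_n \to \emptyset$ for which $\|f\chi_{E_n}\|_{L_a^{p),\theta}}$ does \emph{not} tend to zero, thereby violating the defining condition of absolute continuity of the norm. The governing intuition is that the grand Lebesgue norm is a supremum over the scale parameter $\varepsilon$, and the small-$\varepsilon$ (i.e. $\varepsilon \to 0$) behaviour can keep the supremum bounded away from zero even when each individual $L^{p-\varepsilon}$-piece is small. This is precisely the same phenomenon that makes $C_0^\infty$ fail to be dense in $L^{p),\theta}$, as recalled in the Preliminaries: the closure of the smooth functions consists only of those $f$ with $\lim_{\varepsilon\to 0}\varepsilon^{\theta}\|f\|_{p-\varepsilon}=0$, so a function violating that limit is the natural candidate.

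First I would fix the simplest reasonable choice, say $a(x)\equiv 1$ on a set of finite measure (or a suitable integrable weight if one insists on $\Omega=\mathbb{R}^n$), reducing matters to the classical space $L^{p),\theta}(\Omega)$ whose failure of density is already cited. Then I would select a model function $f$ whose distribution function is tuned so that $\varepsilon^{\theta}\|f\|_{p-\varepsilon}$ stays bounded below as $\varepsilon\to 0$: a standard choice is a power-type singularity such as $f(x)=|x|^{-n/p}\,(\log)^{-\beta}$ near a point, arranged so that $f\in L^{p),\theta}$ but $f\notin L^p$ and the supremum in the norm is genuinely attained in the limit $\varepsilon\to 0^+$. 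Next I would take $E_n$ to be the sub-level sets shrinking toward the singularity (e.g. balls $B(0,1/n)$), so that $E_n\to\emptyset$ in measure, and compute $\|f\chi_{E_n}\|_{L_a^{p),\theta}} = \sup_{0<\varepsilon\le p-1}\varepsilon^{\theta}\|f\chi_{E_n}\|_{p-\varepsilon}$.

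The crux of the argument is the lower bound: I would show that for each fixed $n$, the contribution of the singular part of $f$ inside $E_n$ dominates, so that as $\varepsilon\to 0$ the quantity $\varepsilon^{\theta}\|f\chi_{E_n}\|_{p-\varepsilon}$ does not collapse, and hence $\inf_n \|f\chi_{E_n}\|_{L_a^{p),\theta}}>0$. The mechanism is that the $L^{p-\varepsilon}$ norm of the tail blows up like $\varepsilon^{-\theta}$ (times a constant independent of $n$) as $\varepsilon\to 0$, and the factor $\varepsilon^{\theta}$ exactly cancels this, leaving a positive residual regardless of how small $E_n$ is. Equivalently, I would verify that the chosen $f$ lies outside $\overline{C_0^\infty}\mid_{L_a^{p),\theta}}$, which by the density criterion forces a nonzero truncation limit.

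The main obstacle will be the quantitative control of the small-$\varepsilon$ asymptotics: one must confirm that the singularity of $f$ is calibrated so that $f$ genuinely belongs to $L_a^{p),\theta}(\mathbb{R}^n)$ (so the example is admissible) while simultaneously the truncations $f\chi_{E_n}$ retain a uniform positive norm. Balancing these two requirements is delicate, since a stronger singularity pushes $f$ out of the space entirely, whereas a weaker one allows the truncation norms to vanish. I expect the logarithmic correction factor to be exactly what reconciles membership with non-absolute-continuity, and the bulk of the technical work lies in estimating $\int_{E_n}|f|^{p-\varepsilon}\,dx$ precisely enough to extract the $\varepsilon^{-\theta}$ growth uniformly in $n$.
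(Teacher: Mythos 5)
Your proposal is correct, and it takes a genuinely different route from the paper's proof --- in fact the difference is substantive, because the paper's own route does not work. The paper chooses the \emph{bounded} function $f(t)=e^{-|t|}$, the weight $a(t)=(1+|t|)^{-2}$, and the intervals $E_n=(\tfrac{n}{n+1},1)$; it then derives a lower bound for $\Vert f\chi_{E_n}\Vert_{L_a^{p),\theta}}$ which, as the paper itself remarks, tends to $0$ as $n\to\infty$, and from this concludes only that each truncation norm is strictly positive. That conclusion is vacuous for the purpose at hand: positivity of $\Vert f\chi_{E_n}\Vert$ holds for any function not vanishing on $E_n$ and is perfectly compatible with absolute continuity, which fails only when $\Vert f\chi_{E_n}\Vert$ does \emph{not} tend to $0$. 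Worse, for the paper's data the truncation norms genuinely do tend to zero: since $0\le e^{-|t|}\le 1$, $a\le 1$ and $|E_n|\le 1$,
\begin{equation*}
\left\Vert e^{-|t|}\chi_{E_n}\right\Vert_{L_a^{p),\theta}}
\le\sup_{0<\varepsilon\le p-1}\varepsilon^{\theta}\,|E_n|^{\frac{1}{p-\varepsilon}}
\le (p-1)^{\theta}\,|E_n|^{\frac{1}{p}}\longrightarrow 0,
\end{equation*}
so the paper's example has absolutely continuous norm and cannot prove the proposition. Your mechanism is the correct one, and, as you yourself note, it is essentially forced: non-absolute-continuity in these spaces can only come from the $\varepsilon\to 0$ end of the supremum, i.e.\ from a function outside the closure of $C_0^{\infty}$, hence genuinely singular --- something a bounded integrable function can never be. To settle the one point you left open (the calibration of the singularity): take $f(x)=|x|^{-n/p}\left\vert\log|x|\right\vert^{\theta-\frac{1}{p}}$ supported in $B(0,\tfrac12)$, with any grandizer bounded above and below near the origin (e.g.\ $a(x)=(1+|x|)^{-2n}\in L^{1}(\mathbb R^{n})$, which is harmless there since $a^{\varepsilon/p}$ is then comparable to $1$ uniformly in $\varepsilon$). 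Polar coordinates and the substitution $u=-\log r$ give, for each fixed $m$,
\begin{equation*}
\int_{B(0,1/m)}|f|^{p-\varepsilon}\,dx
= c_n\int_{\log m}^{\infty}e^{-\frac{n\varepsilon}{p}u}\,u^{(\theta-\frac{1}{p})(p-\varepsilon)}\,du
\sim C\,\varepsilon^{-\left[(\theta-\frac{1}{p})(p-\varepsilon)+1\right]}
\qquad(\varepsilon\to 0),
\end{equation*}
with $C>0$ independent of $m$ (the lower limit $\log m$ only perturbs the gamma integral by a bounded amount). Since $\frac{(\theta-\frac{1}{p})(p-\varepsilon)+1}{p-\varepsilon}\to\theta$ and $\varepsilon^{-\varepsilon/(p(p-\varepsilon))}\to 1$, the quantity $\varepsilon^{\theta}\Vert f\chi_{B(0,1/m)}\Vert_{L^{p-\varepsilon}}$ converges, as $\varepsilon\to 0$, to a positive constant independent of $m$. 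Taking $m=1$ this shows $f\in L_a^{p),\theta}(\mathbb R^{n})$; letting $m\to\infty$ it shows $\inf_m\Vert f\chi_{B(0,1/m)}\Vert_{L_a^{p),\theta}}>0$ while $\chi_{B(0,1/m)}\to 0$ a.e., so the norm is not absolutely continuous. This completes your plan exactly as outlined, and it is the argument the paper actually needs.
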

\begin{proof}
For the proof, it will be enough to find a generalized grand Lebesgue space $L_a^{p),\theta}(\mathbb R^n)$ and a  non-absolute continuous function in this space. Let $f(t)=e^{-\mid t\mid},$ let  $a(t)={(1+\mid t\mid)^{-2}}$ a weight function and let $E_n=(\frac{n}{n+1},1)\subset(0,1)\subset\left( \mathbb R\right) $ for $n\in N.$ Thus $(E_n), n\in\mathbb N$  is a sequence of subsets of $ \mathbb R$. It is clear that $E_n\rightarrow \emptyset.$ 
Since
 $f(t)=e^{-\mid t\mid}\in L_{a}^{p),\theta}(\mathbb R),$ then
%\begin{align}
%\|e^{-\mid t\mid}\chi_{E_n}\|_{L_{a}^{p)}}&<\|e^{-\mid t\mid}\|_{L_{a}^{p)}}<2(p-1)^{\theta}<\infty,
%\end{align}
 $e^{-\mid t\mid}\chi_{E_n}\in  L_{a}^{p),\theta}(\mathbb R), $ for all $ n\in \mathbb N.$

Also since $(\frac{n}{n+1},1)\subset (0,1),$ %and  $ \frac {e^{\varepsilon t}}{e^{pt}}> \frac {e^{\varepsilon t}}{e^{p}}$
 we have
\begin{align*}
\|e^{-\mid t\mid}\chi_{E_n}\|_{L_{a}^{p)}}
&>\notag{\varepsilon_0^\theta}(\frac{1}{4})^{\frac{\varepsilon_0}{p(p-\varepsilon_0)}}(\frac{1}{e^{p}})^{\frac{1}{p-\varepsilon_0}}\left[\frac{e^{\varepsilon_0}-e^{\frac{n\varepsilon_0}{n+1}}}{\varepsilon_0}\right] ^{\frac{1}{p-\varepsilon_0 }}\\&
\end{align*}
for any fixed $0<\varepsilon_o\leq p-1.$
It is easy to see that if   $n\rightarrow\infty,$ then
\begin{align*}
{\varepsilon_0^\theta}(\frac{1}{4})^{\frac{\varepsilon_0}{p(p-\varepsilon_0)}}(\frac{1}{e^{p}})^{\frac{1}{p-\varepsilon_0}}\left[\frac{e^{\varepsilon_0}-e^{\frac{n\varepsilon_0}{n+1}}}{\varepsilon_0}\right] ^{\frac{1}{p-\varepsilon_0 }}\rightarrow 0.
\end{align*}
That means
\begin{align*}
\|e^{-\mid t\mid}\chi_{E_n}\|_{L_{a}^{p),\theta}}> 0.
\end{align*}
Hence  $f(t)=e^{-\mid t\mid}$  is not absolutely continuous in    $L_{a}^{p),\theta}(\mathbb R),$  and so  $L_{a}^{p),\theta}(\mathbb R),$   do not have absolutely continuous norm. 
\end{proof}
\begin{corollary}
The generalized grand Lebesgue space  $L_{a}^{p),\theta}(\mathbb R^n)$  is generally not reflexive.
\end{corollary}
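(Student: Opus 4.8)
The plan is to deduce the corollary immediately from Proposition~6 together with the reflexivity criterion recorded as Theorem~11 (the Bennett--Sharpley characterization from \cite{bs}). The strategy is purely a combination of these two results: Proposition~6 supplies a concrete weight for which the generalized grand Lebesgue space fails to have absolutely continuous norm, and Theorem~11 tells us that absolute continuity of the norm is \emph{necessary} for reflexivity.

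First I would check that $L_a^{p),\theta}(\mathbb R^n)$ qualifies as a Banach function space in the sense required by \cite{bs}, so that Theorem~11 legitimately applies. The defining functional $\|\cdot\|_{L_a^{p),\theta}}$ is a genuine norm (positivity, homogeneity and the triangle inequality hold exactly as in properties $1$--$4$ of Theorem~1, restricted to the first summand), it has the lattice property and the Fatou property (the monotone-convergence identity established in property $5$ of Theorem~1 transfers verbatim to $L_a^{p),\theta}$ by the same interchange of suprema), and the finite-measure normalization together with the local-integrability estimate follow from the inclusion $L_a^{p),\theta}(\mathbb R^n)\hookrightarrow L^{p-\varepsilon}(\mathbb R^n,a^{\frac{\varepsilon}{p}})$ of $(2.12)$ and a H\"older argument like the one in property $6$. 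Hence $X=L_a^{p),\theta}(\mathbb R^n)$ is admissible for Theorem~11.

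Next I would invoke Theorem~11 in its contrapositive form: if $X$ does \emph{not} have absolutely continuous norm, then $X$ is not reflexive, since reflexivity requires that both $X$ and its associate space $X'$ have absolutely continuous norm. It therefore suffices to produce a single weight for which $L_a^{p),\theta}$ lacks absolute continuity of the norm, and this is precisely the content of Proposition~6: for $a(t)=(1+|t|)^{-2}$ on $\mathbb R$ the function $f(t)=e^{-|t|}$ belongs to $L_a^{p),\theta}(\mathbb R)$ but the quantities $\|e^{-|t|}\chi_{E_n}\|_{L_a^{p),\theta}}$ fail to tend to $0$ as $E_n\downarrow\emptyset$. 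Consequently, for this choice of weight the necessary condition of Theorem~11 is violated and $L_a^{p),\theta}(\mathbb R)$ is not reflexive; since such a non-reflexive instance exists, the space is not reflexive in general, which is the assertion.

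I do not expect a genuine obstacle in this argument. The only point demanding a little care is the routine but necessary verification in the first paragraph that $L_a^{p),\theta}$ satisfies the Banach-function-space axioms of \cite{bs}; once that is granted, the corollary is an immediate consequence of chaining Proposition~6 into Theorem~11.
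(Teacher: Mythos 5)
Your proposal is correct and follows essentially the same route as the paper: the paper likewise deduces the corollary by combining the proposition exhibiting a weight (namely $a(t)=(1+|t|)^{-2}$ with $f(t)=e^{-|t|}$) for which $L_a^{p),\theta}$ fails to have absolutely continuous norm with the Bennett--Sharpley theorem that reflexivity of a Banach function space requires absolutely continuous norm of both $X$ and $X'$. Your additional paragraph verifying that $L_a^{p),\theta}$ satisfies the Banach-function-space axioms of \cite{bs} is a point the paper passes over in silence, so it is a welcome refinement rather than a departure.
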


\begin{proof}
The proof is clear from Theorem 7 ,Theorem 8 and Proposition 5.
\end{proof}

\begin{corollary}
The dual space $(L_{a}^{p),\theta}(\mathbb R^n))^{\star}$  of the generalized grand  space $L_{a}^{p),\theta}(\mathbb R^n)$  is generally not isometric to the associate space $(L_{a}^{p),\theta}(\mathbb R^n))^{\prime}$ of the generalized grand Lebesgue space.
\end{corollary}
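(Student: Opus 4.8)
The plan is to read off the statement as an immediate consequence of the contrapositive of Theorem 7 (the Bennett--Sharpley criterion) combined with the counterexample furnished by Proposition 5. First I would record that, to invoke Theorem 7, the space $L_a^{p),\theta}(\mathbb R^n)$ must be treated as a Banach function space in the sense of \cite{bs}. This requires checking that its norm is a Banach function norm: it is a norm (established in Theorem 1, properties $1$--$4$), it satisfies the lattice property $0\le g\le f$ a.e.\ $\Rightarrow \|g\|\le\|f\|$ and the Fatou-type monotone convergence property $0\le f_n\uparrow f\Rightarrow \|f_n\|\uparrow\|f\|$ (which is exactly property $5$ of Theorem 1 specialized to the first summand, i.e.\ to $\|\cdot\|_{L_a^{p),\theta}}$), and it controls local integrability, namely $\int_E |f|\,dx\le C_0(p,E)\|f\|$ for every compact $E$ (a variant of property $6$, again applied to the single norm $\|\cdot\|_{L_a^{p),\theta}}$ rather than the full graph norm). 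Thus $L_a^{p),\theta}(\mathbb R^n)$ qualifies as a Banach function space and Theorem 7 applies to it.

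Next I would apply Theorem 7 in its contrapositive form. Theorem 7 asserts that $X^{\ast}$ is canonically isometric to the associate space $X'$ \emph{if and only if} $X$ has absolutely continuous norm. Hence, whenever $X=L_a^{p),\theta}(\mathbb R^n)$ fails to have absolutely continuous norm, the canonical isometry $X^{\ast}\cong X'$ must fail as well; that is, $(L_a^{p),\theta}(\mathbb R^n))^{\ast}$ is \emph{not} isometric to $(L_a^{p),\theta}(\mathbb R^n))'$.

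Finally I would invoke Proposition 5, which exhibits a concrete weight $a(x)=(1+|t|)^{-2}$ (on $\mathbb R$) together with the function $f(t)=e^{-|t|}$ and the shrinking sets $E_n=(\tfrac{n}{n+1},1)$ to show that $\|f\chi_{E_n}\|_{L_a^{p),\theta}}\not\to 0$ as $E_n\to\emptyset$, so that this particular $L_a^{p),\theta}(\mathbb R)$ does \emph{not} have absolutely continuous norm. Combining this with the contrapositive of Theorem 7 yields that for this weight the dual and associate spaces are not isometric, which justifies the qualifier ``generally'' in the statement. The conclusion is then immediate.

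The routine work is negligible; the only genuine point that deserves care is the first paragraph, namely verifying that $L_a^{p),\theta}(\mathbb R^n)$ satisfies the full axiom system of a Banach function space used in \cite{bs}, since Theorem 7 is quoted verbatim from that reference and its hypothesis cannot be bypassed. I expect this Banach-function-space verification to be the main (though mild) obstacle; once it is in place, the corollary follows by simply chaining Proposition 5 with the ``only if'' direction of Theorem 7.
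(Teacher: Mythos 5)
Your proposal is correct and follows essentially the same route as the paper: the paper's own proof of this corollary is exactly the chain you describe, namely Proposition 5 (the weight $a(t)=(1+|t|)^{-2}$ producing a space $L_a^{p),\theta}(\mathbb R)$ without absolutely continuous norm) combined with the contrapositive of the Bennett--Sharpley criterion in Theorem 7 (the paper also cites Theorem 8, which, as your argument implicitly shows, is not needed for this particular corollary). Your additional verification that $L_a^{p),\theta}(\mathbb R^n)$ satisfies the Banach-function-space axioms required by Theorem 7 is a point the paper passes over in silence, so your write-up is, if anything, more careful than the original.
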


\begin{proof}
The proof of this Corollary is easy from Theorem 7 ,Theorem 8 and Proposition 5.
\end{proof}
\begin{proposition}
There exist weight functions $a(x), b(x)$ such that the corresponding space  $A_{q),\theta_2 }^{p),\theta _{1}}\left( \mathbb  R^{n}\right)$  do not have absolutely continuous norm.
\end{proposition}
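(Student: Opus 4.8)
The plan is to reduce the assertion to Proposition 5 by exploiting the fact that the sum norm dominates its first summand. For any measurable $f$ and any measurable set $E \subseteq \mathbb{R}$ one has
\begin{equation*}
\|f\chi_E\|_{A_{q),\theta_2}^{p),\theta_1}} = \|f\chi_E\|_{L_a^{p),\theta_1}} + \|\widehat{f\chi_E}\|_{L_b^{q),\theta_2}} \geq \|f\chi_E\|_{L_a^{p),\theta_1}}.
\end{equation*}
Hence it suffices to produce weights $a,b$, a function $f \in A_{q),\theta_2}^{p),\theta_1}$, and a sequence $E_n \to \emptyset$ for which the right-hand side does not tend to $0$; the obstruction already built for $L_a^{p),\theta_1}$ in Proposition 5 then transfers immediately. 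Concretely I would work in dimension one and reuse the data of Proposition 5, namely $f(t)=e^{-|t|}$, $a(t)=(1+|t|)^{-2}$, and $E_n=\left(\tfrac{n}{n+1},1\right)$, and take $b(t)=(1+|t|)^{-2}$ (any $b\in L^1(\mathbb{R})$ serves).

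The only genuinely new point to check is that $f$, and each truncation $f\chi_{E_n}$, actually lie in $A_{q),\theta_2}^{p),\theta_1}(\mathbb{R})$. Membership in $L_a^{p),\theta_1}(\mathbb{R})$ is provided by Proposition 5 together with the monotonicity $|f\chi_{E_n}|\leq |f|$ of the grand norm, so the work is on the Fourier side. Since $b\in L^1(\mathbb{R})$, the embedding $L^q(\mathbb{R})\subset L_b^{q),\theta_2}(\mathbb{R})$ from $(2.9)$ applies, and it is enough to verify $\widehat{f}$ and each $\widehat{f\chi_{E_n}}$ belong to $L^q(\mathbb{R})$. For $f(t)=e^{-|t|}$ one computes $\widehat{f}(\gamma)=\tfrac{2}{1+\gamma^2}$, which decays like $|\gamma|^{-2}$ and so lies in $L^q(\mathbb{R})$ for every $q>1$. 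Each $f\chi_{E_n}$ is bounded with compact support, hence an $L^1\cap L^2$ function whose transform is continuous and, on account of the jump discontinuities at the endpoints of $E_n$, decays like $|\gamma|^{-1}$ (this is immediate from integrating $\int_{E_n}e^{-t}e^{-i\gamma t}\,dt$ by parts); thus $\widehat{f\chi_{E_n}}\in L^q(\mathbb{R})$ for every $q>1$ as well. This places $f$ and all $f\chi_{E_n}$ in $A_{q),\theta_2}^{p),\theta_1}(\mathbb{R})$.

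With membership secured, the displayed inequality gives
\begin{equation*}
\|f\chi_{E_n}\|_{A_{q),\theta_2}^{p),\theta_1}} \geq \|f\chi_{E_n}\|_{L_a^{p),\theta_1}},
\end{equation*}
and Proposition 5 asserts precisely that, for this choice of $f$, $a$, and $E_n\to\emptyset$, the right-hand side does not converge to $0$. Consequently $f$ is a function of $A_{q),\theta_2}^{p),\theta_1}(\mathbb{R})$ failing to have absolutely continuous norm, which is the claim. I expect the only delicate step to be the Fourier-decay estimates certifying that $f$ and the truncations $f\chi_{E_n}$ genuinely belong to $A_{q),\theta_2}^{p),\theta_1}$; once that is in place the conclusion is a formal consequence of Proposition 5 and the structure of the sum norm, so no further quantitative lower bound on $\|f\chi_{E_n}\|_{L_a^{p),\theta_1}}$ needs to be reproduced here.
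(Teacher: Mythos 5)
Your proposal is correct relative to the paper's Proposition 5 (on which the paper's own proof relies in exactly the same way) and takes essentially the same route: the same witness $f(t)=e^{-|t|}$, $a(t)=b(t)=(1+|t|)^{-2}$, $E_n=\left(\tfrac{n}{n+1},1\right)$, and the same observation that the norm of $A_{q),\theta_2}^{p),\theta_1}$ dominates its $L_a^{p),\theta_1}$ summand, so non-convergence transfers. Your extra care --- verifying $\widehat{f\chi_{E_n}}\in L^{q}(\mathbb R)$ so the truncations genuinely lie in the space, and handling general $q$ where the paper only computes with $q=2$ --- tightens but does not change the argument.
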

\begin{proof}
 Let $b(t)=a(t)={(1+\mid t\mid)^{-2}}$ and let  $E_n=(\frac{n}{n+1},1)\subset(0,1)\subset\left( \mathbb R\right) $ for $n\in N.$ It is clear that $E_n\rightarrow \emptyset$ and   $(1+\mid t\mid^{2})^{-1}>(1+\mid t\mid)^{-2}.$ Take the function $f(t)=e^{-\mid t\mid}.$  We showed in Proposition 4 that  $f(t)=e^{-\mid t\mid}\in L_{a}^{p),\theta_1}(\mathbb R)$ and it is not absolutely continuous in   $ L_{a}^{p),\theta_1}(\mathbb R).$ 
A simple calculation shows that the Fourier transform  $\hat f$ of $f(t)=e^{-\mid t\mid}$  is in $ L_b^{2),\theta_2}(\mathbb R).$  Thus $f\in A_{2),\theta_2 }^{p),\theta _{1}}\left(  \mathbb R^{n}\right).$
 From the definition of the norm of the space   $A_{2),\theta_2 }^{p),\theta _{1}}\left(\mathbb  R^{n}\right)$  we obtain
\begin{align}
\left\Vert e^{-\mid t\mid}\chi_{E_n}\right\Vert _{A_{2),\theta_2}^{p),\theta_1}} =\left\Vert
e^{-\mid t\mid}\chi_{E_n}\right\Vert _{L_a^{p),\theta_1}}+\left\Vert \widehat{e^{-\mid t\mid}\chi_{E_n}}\right\Vert _{L_b^{2),\theta_2}}>\left\Vert e^{-\mid t\mid}\chi_{E_n}\right\Vert _{L_a^{p),\theta_1}}>0. 
\end{align}
Hence  $f(t)$ is not absolutely continuous in    $A_{2),\theta_2 }^{p),\theta _{1}}\left( \mathbb R^{n}\right)$  and so  $A_{2),\theta_2 }^{p),\theta _{1}}\left( \mathbb R^{n}\right)$    do not have absolutely continuous norm. 

\end{proof}
\begin{corollary}
The space   $A_{q),\theta_2 }^{p),\theta _{1}}\left( \mathbb R^{n}\right)$   is generally not reflexive.
\end{corollary}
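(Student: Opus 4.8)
The plan is to deduce non-reflexivity from the abstract reflexivity criterion together with the explicit failure of absolute continuity already recorded in Proposition 6, exactly mirroring the earlier Corollary for $L_a^{p),\theta}(\mathbb R^n)$. Recall that the reflexivity characterization (Theorem 8; \cite{bs}, Corollary 4.4) asserts that a space is reflexive if and only if \emph{both} it and its associate space have absolutely continuous norm. Hence, to certify non-reflexivity it suffices to exhibit a single one of these two conditions failing, and in particular to show that the space itself lacks absolutely continuous norm. This is precisely what Proposition 6 provides for a suitable choice of weights $a,b$.

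Concretely, I would fix the weights $b(t)=a(t)=(1+|t|)^{-2}$, the witness $f(t)=e^{-|t|}$, and the sets $E_n=(\tfrac{n}{n+1},1)$ from Proposition 6, for which $E_n\to\emptyset$ while $\|e^{-|t|}\chi_{E_n}\|_{A_{2),\theta_2}^{p),\theta_1}}\not\to 0$. This says exactly that $A_{2),\theta_2}^{p),\theta_1}(\mathbb R)$ does not have absolutely continuous norm. Invoking the ``only if'' direction of the reflexivity criterion — reflexivity forces the norm of the space itself to be absolutely continuous — this displayed failure is incompatible with reflexivity for this choice of weights. Since the statement asserts only that the space is \emph{generally} not reflexive, i.e.\ that there exist weights for which reflexivity breaks down, this single example closes the argument.

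The main obstacle is one of legitimacy rather than computation: Theorem 8 is stated for Banach function spaces, whose norm must depend only on $|f|$, whereas the norm of $A_{q),\theta_2}^{p),\theta_1}(\mathbb R^n)$ involves $\widehat f$ and therefore is \emph{not} a Banach function norm in that strict lattice sense, so the criterion does not apply verbatim. I would close this gap by not appealing to the lattice axioms but only to the implication I actually use, ``reflexive $\Rightarrow$ absolutely continuous norm,'' which I would establish directly for $A$. The natural mechanism is that the failure of absolute continuity, witnessed in the $L_a^{p),\theta_1}$ component by $e^{-|t|}\chi_{E_n}$, produces a copy of $c_{0}$ inside $A$: passing to the disjointly supported pieces $e^{-|t|}\chi_{E_n\setminus E_{n+1}}$ yields a normalized sequence whose finite linear combinations are controlled in the $L_a^{p),\theta_1}$ seminorm like the supremum norm on $c_{0}$, and since reflexive spaces contain no isomorphic copy of $c_{0}$, reflexivity is impossible. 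The delicate point — and the part I expect to require genuine care — is verifying that the Fourier-transform term $\|\widehat{\,\cdot\,}\|_{L_b^{q),\theta_2}}$ does not disturb this $c_{0}$-structure when one passes from the $L_a^{p),\theta_1}$ component to the full $A$-norm; alternatively one may simply reproduce the proof of the earlier Corollary verbatim, citing Theorem 8 and Proposition 6, if one is content to treat $A_{q),\theta_2}^{p),\theta_1}(\mathbb R^n)$ within the same framework used for $L_a^{p),\theta}(\mathbb R^n)$.
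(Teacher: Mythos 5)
Your fallback --- cite Theorem 8 together with Proposition 6 and stop --- is exactly the paper's own proof: the paper disposes of this corollary with the single sentence ``The proof is clear from Theorem 7, Theorem 8 and Proposition 6.'' Your objection to that citation is correct and goes beyond the paper: Theorem 8 (\cite{bs}, Corollary 4.4) is a statement about Banach \emph{function} spaces, whose norms are lattice norms determined by $|f|$ alone, whereas $\left\Vert f\right\Vert_{A_{q),\theta_2}^{p),\theta_1}}=\left\Vert f\right\Vert_{L_a^{p),\theta_1}}+\left\Vert \hat f\right\Vert_{L_b^{q),\theta_2}}$ is not monotone under pointwise domination $|g|\leq|f|$ because of the Fourier term. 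So the implication ``reflexive $\Rightarrow$ absolutely continuous norm'' cannot be quoted for $A_{q),\theta_2}^{p),\theta_1}$; it must be proved. Identifying this is the genuinely valuable part of your proposal.

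The repair you sketch, however, has a concrete gap, and it occurs already in the $L_a^{p),\theta_1}$ component, before the ``delicate'' Fourier issue you defer. With $E_n=(\tfrac{n}{n+1},1)$ and $a(t)=(1+|t|)^{-2}$, the disjoint pieces $g_n=e^{-|t|}\chi_{E_n\setminus E_{n+1}}$ satisfy $|g_n|\leq 1$, $a^{\varepsilon/p}\leq 1$, and $|E_n\setminus E_{n+1}|=\tfrac{1}{(n+1)(n+2)}=:\mu_n$, whence
\begin{equation*}
\left\Vert g_n\right\Vert_{L_a^{p),\theta_1}}\;\leq\;\sup_{0<\varepsilon\leq p-1}\varepsilon^{\theta_1}\mu_n^{\frac{1}{p-\varepsilon}}\;\leq\;\max\{1,(p-1)^{\theta_1}\}\,\mu_n^{1/p}\longrightarrow 0 .
\end{equation*}
Thus the $g_n$ cannot be normalized into a sequence equivalent to the $c_0$ basis: the upper estimate $\Vert\sum_k c_k x_k\Vert\leq C\max_k|c_k|$ for disjoint vectors comes precisely from an order bound, and any normalization of a sequence whose norms tend to $0$ destroys the order bound $e^{-|t|}$; without it, disjointly supported functions in an $L^{p-\varepsilon}$-type norm combine like an $\ell^{p-\varepsilon}$ sum, not like a supremum. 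Extracting an order-bounded disjoint sequence with norms bounded below from a failure of absolute continuity requires a different construction; consecutive differences of nested sets do not suffice. Worse, the same computation with $\mu_n=|E_n|=\tfrac{1}{n+1}$ gives $\Vert e^{-|t|}\chi_{E_n}\Vert_{L_a^{p),\theta_1}}\to 0$: the bounded witness $e^{-|t|}$ on shrinking subsets of $(0,1)$ does \emph{not} exhibit failure of absolutely continuous norm at all (note that the lower bound displayed in the paper's proof of Proposition 5 itself tends to $0$ as $n\to\infty$, which proves nothing). So both the paper's one-line proof and your repair rest on a premise that is unestablished, and false for this witness; a correct argument needs a genuinely non-absolutely-continuous element --- necessarily with an unbounded profile, such as $|t|^{-1/p}\left(\log(1/|t|)\right)^{\theta_1-1/p}\chi_{(0,1)}$, or with mass escaping to infinity --- together with a direct proof that reflexivity of $A_{q),\theta_2}^{p),\theta_1}$ is incompatible with its presence, not an appeal to the Banach function space machinery.
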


\begin{proof}
The proof is clear from Theorem 7 ,Theorem 8 and Proposition 6.
\end{proof}

\begin{corollary}
The dual space $( A_{q),\theta_2 }^{p),\theta _{1}}\left(\mathbb  R^{n}\right)  )^{\star}$  of the space $ A_{q),\theta_2 }^{p),\theta _{1}}\left( \mathbb R^{n}\right) $  is generally not isometric to
the associate space $( A_{q),\theta_2 }^{p),\theta _{1}}\left(\mathbb  R^{n}\right))^{\prime}  $  of the space $ A_{q),\theta_2 }^{p),\theta _{1}}\left( \mathbb R^{n}\right) $.
\end{corollary}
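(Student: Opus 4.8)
The plan is to obtain this statement as an immediate consequence of the abstract duality theory for Banach function spaces, applied to the concrete failure of absolute continuity that has already been constructed. The essential preliminary point is that $A_{q),\theta_2}^{p),\theta_1}(\mathbb R^n)$ is a Banach function space in the sense of \cite{bs}: the norm axioms (positivity, definiteness, homogeneity, the triangle inequality, the Fatou-type monotone convergence property, and the local integrability estimate) were verified in Theorem 1, and completeness was established in Theorem 2. Consequently the associate space $(A_{q),\theta_2}^{p),\theta_1}(\mathbb R^n))'$ is well defined and the characterization theorem of \cite{bs} is legitimately applicable to it.

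First I would recall the characterization (Theorem 7 above, namely \cite{bs}, Corollary $4.3$): the dual $X^{\ast}$ of a Banach function space $X$ is canonically isometric to its associate space $X'$ \emph{if and only if} $X$ has absolutely continuous norm. Reading this in contrapositive form, if $X$ fails to have absolutely continuous norm then $X^{\ast}$ cannot be isometric to $X'$ under the canonical pairing. Next I would invoke Proposition 6, which exhibits explicit grandizers $a(x)=b(x)=(1+|t|)^{-2}$ and the test function $f(t)=e^{-|t|}$ witnessing that $A_{q),\theta_2}^{p),\theta_1}(\mathbb R^n)$ does not have absolutely continuous norm for this choice of weights. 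Feeding this into the contrapositive yields, for these weights, that $(A_{q),\theta_2}^{p),\theta_1}(\mathbb R^n))^{\ast}$ is not isometric to $(A_{q),\theta_2}^{p),\theta_1}(\mathbb R^n))'$. Since the assertion only claims that the isometry fails \emph{generally}, producing this single admissible pair of weights suffices.

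The main obstacle here is structural rather than analytic: one must be certain that $A_{q),\theta_2}^{p),\theta_1}(\mathbb R^n)$ genuinely satisfies every Banach function space axiom demanded by \cite{bs}, so that the abstract characterization truly applies to it and to its associate space. In particular the Fatou property and the continuous embedding into $L^1$ on compact sets must hold, and these are precisely what items $5$ and $6$ of Theorem 1 supply. A secondary point worth checking is that the failure of absolute continuity detected in Proposition 6 occurs at the level of the full $A$-norm and not merely within one summand; this is immediate, since the $A$-norm dominates the $L_a^{p),\theta_1}$-norm and the latter is already non-absolutely-continuous on the test function $e^{-|t|}$. With these checks in place the corollary follows by combining Theorem 7 with Proposition 6.
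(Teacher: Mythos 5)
Your proposal is correct and takes essentially the same route as the paper, whose entire proof is to combine Theorem 7 (the Bennett--Sharpley characterization, \cite{bs} Corollary 4.3) in contrapositive form with the failure of absolute continuity of the norm exhibited in Proposition 6. The only differences are that the paper additionally cites Theorem 8, which is not actually needed for this corollary (it is used for the reflexivity statement), while your write-up makes explicit the Banach-function-space hypotheses that the paper leaves implicit.
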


\begin{proof}
The proof of this theorem is easy from Theorem 7 ,Theorem 8 and Proposition 6.
\end{proof}

\end{document}